\pgfplotsset{compat=1.11}
\pgfplotsset{
	colormap={parula}{
	rgb255=(53,42,135)
	rgb255=(15,92,221)
	rgb255=(18,125,216)
	rgb255=(7,156,207)
	rgb255=(21,177,180)
	rgb255=(89,189,140)
	rgb255=(165,190,107)
	rgb255=(225,185,82)
	rgb255=(252,206,46)
	rgb255=(249,251,14)}
}
\newcommand{\dx}{\,\mathrm{d}}
\newcommand{\tT}{\mathrm{T}}
\newcommand{\domain}{\Omega}
\DeclareMathOperator{\R}{\mathbb{R}}
\newcommand{\NN}{\mathbb{N}}
\DeclareMathOperator*{\argmin}{arg\,min} 
\DeclareMathOperator{\prox}{prox} 
\DeclareMathOperator{\TV}{TV}
\newcommand{\N}{\mathbb{N}}
\newcommand{\Id}{\mathrm{Id}}
\newcommand{\weakly}{\rightharpoonup}
\DeclareMathOperator{\BV}{BV}
\newtheorem{theorem}{Theorem}[section]
\newtheorem{remark}[theorem]{Remark}
\title{Template-Based Image Reconstruction\\ Facing Different Topologies}
\author{Sebastian Neumayer \and Antonia Topalovic}
\date{\today}
\begin{document}

\maketitle
\begin{abstract}
    The reconstruction of images from measured data is an increasing field of research.
    For highly under-determined problems, template-based image reconstruction provides a way of compensating for the lack of sufficient data.
    A caveat of this approach is that dealing with different topologies of the template and the target image is challenging.
    In this paper, we propose a LDDMM-based image-reconstruction model that resolves this issue by adding a source term.
    On the theoretical side, we show that the model satisfies all criteria for being a well-posed regularization method.
    For the implementation, we pursue a discretize-then-optimize approach involving the proximal alternating linearized minimization algorithm, which is known to converge under mild assumptions.
    Our simulations with both artificial and real data confirm the robustness of the method, and its ability to successfully deal with topology changes even if the available amount of data is very limited.
    
\end{abstract}

\section{Introduction} \label{sec:intro_3}
In medical applications such as computed tomography (CT) \cite{N2001}, images are typically observed via indirect and potentially noisy measurements.
Especially when the amount of measured data is limited, obtaining meaningful reconstructions is challenging.
This is, for instance, the case in limited-angle CT \cite{Fri2013,N2001}, where sparse data is acquired in order to minimize exposure time of organisms to X-radiation.
In such settings, it is inevitable to add a priori information about the target into the reconstruction process, e.g., in form of a template image that is somehow close to the expected reconstruction.
Template-based methods, outlined in more detail below, encode this closeness assumption directly into the reconstruction process.
Hence, any reconstruction will strongly depend on the chosen template.
If a good template is available, e.g., from an earlier observation, competing methods such as the filtered backprojection \cite{N2001} or total-variation (TV) regularization~\cite{ROF92} are outperformed by large margins \cite[Sec.~10]{CO18}.
Clearly, template-based methods can also be applied for other inverse problems such as deblurring or MRI. 
In the following, we discus \emph{indirect image matching} in more detail.
The included examples lead us to the proposed model, which is a simplified extension of the metamorphosis approach proposed in~\cite{GO18}.

\paragraph{Indirect image matching}
Let $\Omega \subset \R^d$ be a bounded domain.
Image matching refers to the task of transforming a template image $T\in L^2(\Omega)$ to match a target image $U\in L^2(\Omega)$ as closely as possible regarding some misfit measure.
For indirect image matching, the target $U$ is additionally unknown and only specified to be a solution of the \emph{image-reconstruction} problem
\begin{equation}
	K \circ U + n^{\delta}= g,
\label{eq:operatorequation}
\end{equation}
where $K\colon L^2(\Omega) \to Y$ is a (not necessarily linear) data acquisition operator (often related to a physical process) that maps into a Banach space $Y$, $g \in Y$ is the measurement, and the noise $n^{\delta}$ models measurement imperfections.
Consequently, one has to jointly solve a motion estimation problem and an image-reconstruction problem.
Various indirect image matching models have been proposed in the literature \cite{CO18,GO18,HinSzeWanSalJos12,LNOS19,NPS18,OECDRB18}.
For all models, the transformation between the template $T$ and the unknown target $U$ can be modeled with a partial differential equation (PDE).
Essentially, the template`s domain $\Omega$ is deformed by a diffeomorphism, usually resulting in transformations that appear natural to humans.
This idea originates from the flow of diffeomorphism model \cite{CRM96, DGM98, Tro95}, in which image pixel intensities are transported along trajectories determined by diffeomorphism paths.
For comprehensive overviews, we refer to \cite{MTY15,Younes2010}, and for a historic account we refer to \cite{MTY02}.
In the following, we discuss two indirect image matching models in more detail.

As the space of diffeomorphisms has no natural vector space structure, the popular class of Large Diffeomorphic Deformation Metric Mapping (LDDMM) image matching models relies on a subset of deformations generated by admissible (smooth) velocity fields $v\in \mathcal V \coloneqq L^2([0,1], V)$ via the ordinary differential equation (ODE)
\begin{equation}\label{eq:Lag}
\begin{alignedat}{2}
&\tfrac{\dx}{\dx t} \varphi(t,x) = v\bigl( t, \varphi(t,x) \bigr)   
\quad &&\text{for } (t,x) \in [0,1]\times\Omega,
\\
&\varphi(0,x)= x \quad &&\text{for } x\in \Omega.
\end{alignedat}
\end{equation}
In the definition of $\mathcal V$, $V$ is an admissible vector space that continuously embeds into $C_0^{1,\alpha}(\Omega, \R^d)$, $0<\alpha \leq 1$, namely the closure of  $C_c^{\infty}(\Omega, \R^d)$ with respect to the Hölder norm $\Vert \cdot \Vert_{C^{1,\alpha}}$.
Using the diffeomorphism $\varphi$, we implicitly define a transformation path $I \colon [0,1] \times \Omega \to \R$ starting from the template $T$ via $I(t, \varphi(t,x)) = T(x)$.
From an Eulerian perspective, we can link this path $I$ to the velocity $v$ directly \cite{CO18}, namely as the (weak) solution of the transport equation
\begin{equation}\label{eq:transport}
\begin{alignedat}{2}
&\tfrac{\partial}{\partial t} I(t,x) + v(t,x) 
\nabla_x I(t,x)  = 0 \quad &&\text{for $(t,x)\in [0,1]\times\Omega$,} \\
&I(0,x) = T(x) \quad &&\text{for $x\in\Omega$.}
\end{alignedat}
\end{equation}
We denote the set of feasible tuples $(I,v) \in L^2([0,1],L^2(\Omega)) \times \mathcal V$ solving this PDE by $\mathcal A$.
Given the data $g$ and the template $T$, a reconstruction can now be defined as $R= I(1,\cdot) = T(\varphi^{-1}(1,\cdot))$\footnote{Throughout the manuscript, the inverse of $\varphi$ is always with respect to the spatial coordinate only.}, where $I$ is a minimizer of the variational problem
\begin{align}\label{eq:LDDMM_rec}
	\min_{(I,v) \in \mathcal A} \mathcal D\bigl(K \circ I(1,\cdot),g\bigr) + \lambda \mathcal E(v).
\end{align}
Here, the data fidelity term $\mathcal D\colon Y \times Y \to \R_{\geq 0}$ quantifies the misfit of $K \circ I(1,\cdot)$ with the measurements $g$, and the regularizer $\mathcal E \colon \mathcal V \to \R_{\geq 0}$ enforces the required smoothness of $v$ \cite{CO18}.
Note that \eqref{eq:Lag} can be simplified by using linearized deformations and a Taylor expansion around some initial flow field,
leading to optical-flow-based transformation models \cite{AndSchZul15,BorItoKun03,HS81,OECDRB18}.

However, an image transformation model purely based on diffeomorphisms can yield unsatisfying reconstructions, e.g., if the images have different mass or topological properties \cite{CO18}.
To resolve this issue, \cite{GO18} proposed to replace the flow of diffeomorphism model \eqref{eq:transport} underlying \eqref{eq:LDDMM_rec} with the metamorphosis model \cite{MY2001, RY16, TY2005a, TY2005b}.
In addition to the transport of pixel intensities, metamorphosis also allows intensity variations along the trajectories based on a source term $\zeta \in L^2([0,1] \times \Omega)$.
Hence, this model can create or remove objects during the transformation process.
To put this into formulas, metamorphosis paths $I$ are solutions of the transport equation
\begin{equation}\label{eq:transport_source}
\begin{alignedat}{2}
&\tfrac{\partial}{\partial t} I(t,x) + v(t,x) 
\nabla_x I(t,x)  = \zeta(t,x) \quad &&\text{for $(t,x)\in [0,1]\times\Omega$,} \\
&I(0,x) = T(x) \quad &&\text{for $x\in\Omega$.}
\end{alignedat}
\end{equation}
We denote the set of feasible tuples $(I,v,\zeta ) \in L^2([0,1] \times \Omega) \times \mathcal V \times L^2([0,1]\times\Omega)$ by $\mathcal B$.
From the Lagrangian perspective, $I(t,x)$ can be equivalently defined using the solution $\varphi$ of \eqref{eq:Lag}via
\begin{equation}\label{eq:LagSource}
\begin{alignedat}{2}
&I\bigl(t,\varphi(t,x)\bigr) = T(x) + \int_0^t \zeta ( s, \varphi(s,x)) \dx s 
\quad &&\text{for } (t,x) \in [0,1]\times\Omega.
\end{alignedat}
\end{equation}
The relation between \eqref{eq:transport_source} and \eqref{eq:LagSource} is commonly known as the method of characteristics \cite{Evans1998}.
Given the data $g$ and the template $T$, a reconstruction can then be defined as $R= I(1,\cdot)$, where $I$ solves
\begin{align}\label{eq:Indirect_Meta}
\min_{(I,v,\zeta ) \in \mathcal B} \mathcal D\bigl(K \circ I(1,\cdot),g\bigr) + \lambda_1 \mathcal E_1(v) + \lambda_2\mathcal E_2(\zeta ).
\end{align}
Here, the additional regularizer $\mathcal E_2 \colon L^2([0,1] \times \Omega) \to \R_{\geq 0}$ enforces the necessary regularity of $\zeta$.

\paragraph{Proposed model}
If we take a closer look at \eqref{eq:Indirect_Meta}, we notice that the objective takes only $I(1,\cdot)$ into account and is blind to the path $I$ at all other times.
Further, it holds that $I(1,\cdot) = T(\varphi^{-1}(1,\cdot)) + z$, where $z \in L^2(\Omega)$ is defined via \smash{$z(\varphi(1,x))= \int_0^1 \zeta(s, \varphi(s,x)) \dx s$}.
Consequently, any reconstruction $R$ consists of a template deformation and some intensity change $z$ that depends on $\zeta$ and $\varphi$.
In \eqref{eq:Indirect_Meta}, we have a complicated regularization of $z$ in terms of both $\mathcal E_1$ and $\mathcal E_2$.
To obtain a simpler reconstruction model, we propose to replace the underlying transformation model \eqref{eq:transport_source} in \eqref{eq:Indirect_Meta} by
\begin{equation}\label{eq:Lag_simp}
\begin{alignedat}{2}
&\tfrac{\dx}{\dx t} \varphi(t,x) = v\bigl( t, \varphi(t,x) \bigr)   
\quad &&\text{for } (t,x) \in [0,1]\times\Omega,
\\
&\varphi(0,x)= x \quad &&\text{for } x\in \Omega,
\\
&I(x) = T\bigl(\varphi^{-1}(1,x)\bigr) + z(x) \quad &&\text{for } x\in \Omega.
\end{alignedat}
\end{equation}
The set of tuples $(I,v,z) \in L^2(\Omega) \times \mathcal V \times L^2(\Omega)$ satisfying \eqref{eq:Lag_simp} is denoted by $\mathcal C$.
Loosely speaking, we first deform the template $T$ based on $\varphi$.
Afterwards, we modify the pixel values with the source $z$.
Based on the simplified transformation model \eqref{eq:Lag_simp}, we propose the reconstruction model
\begin{align}\label{eq:Indirect_Meta_simp}
\min_{(R,v,z ) \in \mathcal C} \mathcal D\bigl(K \circ R,g\bigr) + \lambda_1 \mathcal E_1(v) + \lambda_2\mathcal E_2(z).
\end{align}
Instead of implicitly regularizing $z$ as in \eqref{eq:Indirect_Meta}, we directly regularize it with $\mathcal E_2 \colon L^2(\Omega) \to \R_{\geq 0}$.
To this end, we can rely on any well-established (convex) regularizer for images.
The reconstruction $R$ in \eqref{eq:Indirect_Meta_simp} is composed of two summands, where one depends on $v$ and the other on $z$.
Therefore, designing efficient numerical schemes for \eqref{eq:Lag_simp} appears simpler than for \eqref{eq:LagSource}, where we have a highly nonlinear dependence of $R$ on $\zeta$.
Additionally, the dimensionality of $z$ is lower than that of $\zeta$ as we do not have a time dependence.
Since each tuple $(v,z)$ gives rise to a unique reconstruction $R_{v,z}$, we can eliminate the set $\mathcal C$ from \eqref{eq:Indirect_Meta_simp} and end up with
\begin{align}\label{eq:Indirect_Meta_elim}
\min_{(v,z ) \in \mathcal V \times L^2(\Omega)} J_{\lambda,g} (v,z) \coloneqq \mathcal D\bigl(K \circ R_{v,z},g\bigr) + \lambda_1 \mathcal E_1(v) + \lambda_2\mathcal E_2(z),
\end{align}
where the system \eqref{eq:Lag_simp} is implicitly encoded in $R_{v,z}$.

\paragraph{Contributions}
We study the regularizing properties of the proposed model \eqref{eq:Indirect_Meta_elim}, and develop efficient numerical schemes for solving it.
To the best of our knowledge, the only other template-based reconstruction model with a source term is \cite{GO18}, but their approach involves a nonlinear coupling between the deformation and the source part.
Further, their regularizer choice is restricted to differentiable ones.
The simulations in \cite{GO18} are based on $L^2$ regularization for $\zeta$ in \eqref{eq:Indirect_Meta}, which in our experiments turned out to be unsuitable for sparse data.
Better regularizers seem to be necessary to avoid reconstruction artifacts related to the source term in both \eqref{eq:Indirect_Meta} and \eqref{eq:Indirect_Meta_elim}.
Hence, we propose to use TV regularization instead, which is known to yield good results for CT problems at reasonable computational cost.
With this choice, we obtain meaningful reconstructions even for sparse data, which is a setting that \cite{GO18} does not target.
Compared to \cite{LNOS19}, we achieve reconstructions of similar quality, but with the advantage that topology changes are possible due to $z$.
The detail of reconstructable structures that are not present in the template $T$ depends on the available amount of data $g$, i.e., we cannot reconstruct detailed structures out of nothing.
Hence, the proposed method is most useful if a good template $T$ is available.

Many algorithmic approaches for the flow of diffeomorphism model have been proposed during the last years \cite{Ash07, MilTroYou06, Hernandez2017, NHPV2011, SinHinJosFle13, VRRC12, YanKwiStyNie17}.
For linear forward operators $K$, these approaches can be often extended to the indirect setting \eqref{eq:Indirect_Meta_simp} without any complicated modifications.
As Lagrangian approaches turned out to be very efficient for indirect image matching, we decided to adapt the methods developed in \cite{LNOS19,ManRut17}, which build upon the FAIR toolbox \cite{Mod2009}.
Since the problem is non-smooth due to the TV regularization of $z$, we cannot deploy their proposed Gauss--Newton--Krylov solver, and we use the iPALM algorithm \cite{PS17} instead.
Similarly as in \cite{LNOS19}, the ODE in \eqref{eq:Lag_simp} is solved with an explicit Runge--Kutta method, which allows one for efficient algorithmic differentiation of $(v,z) \mapsto R_{v,z}$.
By construction of $R_{v,z}$, computing its gradient $\nabla R_{v,z}$ has basically the same cost as for the LDDMM-based model \cite{LNOS19}, which does not involve a source term $z$.
Further, the approach does not require the storage of multiple space-time vector fields or images at intermediate time instances, as it is often the case when directly solving the PDE \eqref{eq:transport_source} with Eulerian methods.
We want to emphasize that the proposed scheme can be implemented matrix-free, which is crucial when using dense forward operators $K$ such as the Radon transform.
Since iPALM has in general worse convergence rates than second-order methods, we combine it with a Gauss--Newton solver for $v$ as post-processing step.
This can only improve the objective function values, and in practice we also observed an improved reconstruction quality.

\paragraph{Outline}
In Section~\ref{sec:prelim}, the necessary theoretical background for the flow equation \eqref{eq:Lag} and the total variation is provided.
For the proposed model \eqref{eq:Indirect_Meta_elim}, existence of a minimizer, stability with respect to the data, and convergence for vanishing noise are established in Section~\ref{sec:proper}.
In order to approximate solutions of~\eqref{eq:Indirect_Meta_elim} numerically, we follow a \emph{discretize-then-optimize} approach that involves the iPALM algorithm as outlined in Section~\ref{sec:discrete}.
This allows one to easily exchange the regularizer for $v$ and $z$ if desired.
Our implementation builds upon the FAIR toolbox \cite{Mod2009}, which allows for a simple extension to other distances and regularizers that are already implemented as part of the toolbox.
Numerical results for the proposed model are provided in Section~\ref{sec:experiments}.
Finally, conclusions are drawn in Section~\ref{sec:conclusions}.

\section{Preliminaries}\label{sec:prelim} 
In this section, we briefly review the necessary theoretical background.
\paragraph{Diffeomorphisms}
Recall that the deformations $\varphi$ are induced by \eqref{eq:Lag}.
For our theoretical investigations, it is useful to consider different initial times $s \in [0,1]$, i.e., the modified equation
\begin{equation}\label{eq:Lag_mod}
\begin{alignedat}{2}
&\tfrac{\dx}{\dx t} \varphi_{s,v}(t,x) = v\bigl( t, \varphi_{s,v}(t,x) \bigr)   
\quad &&\text{for } (t,x) \in [0,1]\times\Omega,
\\
&\varphi_{s,v}(s,x)= x \quad &&\text{for } x\in \Omega. 
\end{alignedat}
\end{equation}
Here, the subscripts express the dependence on the initial time $s$ and the velocity field $v$.
This generalization enables us to move back and forth on trajectories $\varphi_{s,v}(\cdot,x)$ starting from arbitrary time instants $s$, allowing us to rely on a unified theoretical result.
The following theorem is a reformulation of \cite[Thms.~1 and 9]{TY2005a} and characterizes the solutions of \eqref{eq:Lag_mod}.
\begin{theorem}\label{thm:DiffeoVelo}
	Let $v \in L^2([0, T], \mathcal V)$, where $\mathcal V$ is continuously embedded into $C_0^{1,\alpha}(\Omega, \R^d)$ for some $0<\alpha \leq 1$.
	Given $s \in [0,T]$, there exists a unique global solution $\varphi_{s,v} \in C([0,T],C^1(\overline{\Omega}, \R^d))$  of \eqref{eq:Lag_mod}.
	Further, the solution operator \smash{$\Phi_s \colon L^2([0,T],\mathcal V) \to C([0,T] \times \overline \Omega, \R^d)$} assigning a flow $\varphi_{s,v}$ to a velocity field $v$ is continuous with respect to the weak topology in $L^2([0,T],\mathcal V)$.
\end{theorem}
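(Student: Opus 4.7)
The theorem is a known reformulation of results from \cite{TY2005a}, so the plan would be to follow their proof strategy and adapt it to the version stated here. The argument splits naturally into three parts: well-posedness of the flow, $C^1$ regularity in space, and weak continuity in $v$. The first two are essentially classical once the right function-space setting is fixed; the real work is the third part.

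For existence and uniqueness on $[0,T]$, first note that the continuous embedding $\mathcal{V} \hookrightarrow C_0^{1,\alpha}(\Omega,\R^d)$ means $t \mapsto \|v(t,\cdot)\|_{C^{1,\alpha}}$ is in $L^2([0,T]) \subset L^1([0,T])$, so $v$ satisfies Carathéodory conditions with an integrable-in-time, globally-Lipschitz-in-space bound. Writing the equation in integral form $\varphi_{s,v}(t,x) = x + \int_s^t v(\tau,\varphi_{s,v}(\tau,x))\dx\tau$, a standard Banach fixed-point argument on $C([s-\varepsilon,s+\varepsilon],\bar\Omega)$ with $\varepsilon$ chosen so that $\int |L(\tau)| \dx\tau < 1$, followed by extension using global-in-time Grönwall bounds, gives the unique global solution. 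Because $v \in C_0^{1,\alpha}$ vanishes on $\partial\Omega$, trajectories cannot leave $\bar\Omega$, so the flow is defined on all of $[0,T]\times\Omega$. The $C^1$-regularity in $x$ follows by showing that the formal spatial derivative $\partial_x\varphi_{s,v}$ solves a linear matrix ODE with coefficient $D_xv(\tau,\varphi_{s,v}(\tau,x))$ whose Hölder norm is integrable in $\tau$, and invoking Grönwall on Hölder seminorms.

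The main obstacle is the weak continuity of $\Phi_s$. Assume $v_n \weakly v$ in $L^2([0,T],\mathcal{V})$. First, by the Banach--Steinhaus theorem, $\|v_n\|_{L^2([0,T],\mathcal{V})} \leq M$. From this uniform bound and the embedding into $C^{1,\alpha}$, the integral equation gives uniform bounds on $\varphi_{s,v_n}$ together with equicontinuity: in time via Cauchy--Schwarz, $|\varphi_{s,v_n}(t_1,x)-\varphi_{s,v_n}(t_2,x)| \leq M\sqrt{|t_1-t_2|}$, and in space via Grönwall applied with the (deterministic, integrable) Lipschitz constant. Arzelà--Ascoli therefore yields a subsequence $\varphi_{s,v_{n_k}} \to \tilde\varphi$ uniformly on $[0,T]\times\bar\Omega$.

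It remains to identify $\tilde\varphi = \varphi_{s,v}$ by passing to the limit in $\varphi_{s,v_{n_k}}(t,x) = x + \int_s^t v_{n_k}(\tau,\varphi_{s,v_{n_k}}(\tau,x))\dx\tau$. Split the integrand as
\[
v_{n_k}(\tau,\varphi_{s,v_{n_k}}(\tau,x)) - v_{n_k}(\tau,\tilde\varphi(\tau,x)) + v_{n_k}(\tau,\tilde\varphi(\tau,x)).
\]
The difference term is controlled by $\|v_{n_k}(\tau)\|_{C^{1,\alpha}} \cdot \|\varphi_{s,v_{n_k}}(\tau,\cdot) - \tilde\varphi(\tau,\cdot)\|_\infty$; integrating in $\tau$ and using Cauchy--Schwarz together with the uniform convergence of the subsequence shows this vanishes. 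For the remaining term, the functional $w \mapsto \int_s^t w(\tau,\tilde\varphi(\tau,x))\dx\tau$ is continuous and linear on $L^2([0,T],\mathcal{V})$, since the map $\tau \mapsto \delta_{\tilde\varphi(\tau,x)}$ lies in $L^2([0,T],\mathcal{V}^*)$ (measurability from continuity of $\tilde\varphi$, boundedness from the embedding $\mathcal{V}\hookrightarrow C^0$). Weak convergence of $v_{n_k}$ then passes through, so $\tilde\varphi$ satisfies the integral equation for $v$, and by uniqueness $\tilde\varphi = \varphi_{s,v}$. Since every subsequence has a further subsequence converging to the same limit, the whole sequence converges, establishing weak-to-strong continuity of $\Phi_s$.
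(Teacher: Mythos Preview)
The paper does not actually prove this theorem; it is stated as a direct reformulation of \cite[Thms.~1 and 9]{TY2005a} with no argument given. Your sketch is a faithful and correct reconstruction of the standard Trouv\'e--Younes proof: Carath\'eodory existence and uniqueness from the $L^1$-in-time Lipschitz bound, $C^1$ regularity via the variational equation and Gr\"onwall, and weak-to-uniform continuity via uniform bounds, Arzel\`a--Ascoli, and identification of the limit through the splitting you describe. The key step---recognising that $w \mapsto \int_s^t w(\tau,\tilde\varphi(\tau,x))\,\mathrm{d}\tau$ is a bounded linear functional on $L^2([0,T],\mathcal V)$ because point evaluation lies in $\mathcal V^*$---is exactly the mechanism that makes weak convergence sufficient, and you have it right.
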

As $\varphi_{t,v}(0,\varphi_{0,v}(t,x)) = x$, we directly get that $\varphi_{0,v}(t,\cdot)$ is a diffeomorphism for every $t \in [0,1]$.
Now, let us have a closer look at the solutions of \eqref{eq:Lag_simp}.
Since $\varphi_{0,v}^{-1}(1,\cdot) = \varphi_{1,v}(0,\cdot)$, we conclude by Theorem \ref{thm:DiffeoVelo} that $v_i \weakly v$ in $L^2([0, T],\mathcal V)$ implies \smash{$\varphi_{0,v_i}^{-1}(1,\cdot) \to \varphi_{0,v}^{-1}(1,\cdot) \in C(\overline \Omega, \R^d)$}.
Further, \cite[Thm.~3.1.10]{Effland17} implies that \smash{$\{\varphi_{0,v_i}^{-1}(1,\cdot)\}_{i\in \NN}$} is uniformly bounded in $C^{1,\alpha}(\overline{\domain}, \R^d)$.
Hence, \cite[Cor.~3]{NPS17} implies that \smash{$T \circ \varphi_{0,v_i}^{-1}(1,\cdot) \to T\circ \varphi_{0,v}^{-1}(1,\cdot)$} in $L^2(\domain)$.
If further $z_i \weakly z$ in $L^2(\Omega)$, this directly implies $R_{v_i,z_i} \weakly R_{v,z}$ in $L^2(\Omega)$.

\paragraph{Total variation}
The total variation \cite{ROF92} is a popular regularizer in imaging as it tends to preserve edges and sharp structures, which is in contrast to other techniques such as linear smoothing or Tikhonov regularization \cite{BL18}.
For any $f \in L^1(\Omega)$, the \emph{distributional gradient} 
$\nabla f \in C^1_c(\Omega, \R^d)^*$ is given by
\begin{equation}\label{eq:Gradient}
  \nabla f(\psi) \coloneqq - \int_\Omega f \mathrm{div}\, \psi \dx x \qquad \forall \psi \in C^1_c(\Omega, \R^d).
\end{equation}
Based on this, the \emph{total variation} of $f \in L^1(\Omega,\mathbb{R^d})$ is introduced as
\begin{equation}
    \TV(f) \coloneqq \sup \bigl\{\nabla f(\psi) \;:\; \psi \in C^1_c(\Omega, \R^d),\, \|\psi\|_\infty \le 1 \bigr\},
\end{equation}
and the \emph{functions of bounded variation} are defined as
\begin{equation}   \label{eq:BV}
\BV(\Omega) \coloneqq \bigl\{ f \in L^1(\Omega) \;:\; \TV(f) < \infty \bigr\}.
\end{equation}
Equipped with the norm
$\|f\|_{BV} \coloneqq \|f\|_{L^1(\Omega)} + \TV(f)$, $\BV(\Omega)$ becomes a Banach space.
A function $\mu\colon\mathcal B(\Omega) \to \R^d$ on the Borel $\sigma$-algebra $\mathcal B(\Omega)$ is called a 
\emph{vector-valued Radon measure} if every coordinate function $\mu_{i}\colon\mathcal B(\Omega) \to \mathbb R$ is a Radon measure. 
We denote by $\mathcal M(\Omega, \R^d)$  the \emph{space of vector-valued finite Radon measures}. 
Due to the Riesz--Markov--Kakutani representation theorem, it holds $C_0(\Omega, \R^d)^* \cong \mathcal M(\Omega, \R^d)$, where $C_0(\Omega, \R^d)$ denotes the continuous functions vanishing at infinity.
Hence, we can equip $\mathcal M(\Omega, \R^d)$ with the associated \emph{weak* convergence}.
For $f \in \mathrm BV(\Omega, \R^d)$ it holds
\begin{equation}
    |\nabla f(\psi)| \le \TV(f) \|\psi\|_\infty \qquad \forall \psi \in C^1_c(\Omega, \R^d),
\end{equation}
and since $C_c^1(\Omega, \R^d)$ 
is dense and continuously embedded in $C_0(\Omega, \R^d)$, 
the gradient $\nabla f$ can be uniquely extended to a continuous linear functional on $C_0(\Omega, \R^d)$ using the Hahn--Banach theorem.
Therefore, we can associate a unique measure $Df \in \mathcal M(\Omega, \R^d)$ to $\nabla f$ such that 
\begin{equation}   \label{eq:TVmeasure}
  \nabla f(\psi) = 
  \sum_{i =1}^d \int_\Omega \psi_i \dx Df_{i} \qquad \forall \psi \in C^1_c(\Omega, \R^d).
\end{equation}
Hence, $\BV(\Omega)$ consists of those functions $f \in L^1(\Omega)$ having a distributional gradient that is a finite Radon measures.
Note that the domain of $\TV$ can be extended to $L^p(\Omega)$, $p \in [1, \infty)$, via
\begin{equation} \label{BV}
	{\TV} (f) \coloneqq 
	\left\{
	\begin{array}{ll}
	\TV(f)& \mathrm{for} \; f \in BV(\Omega),\\
	+\infty & \mathrm{for} \; f \in L^p(\Omega)\backslash BV(\Omega).
	\end{array}
	\right.
\end{equation}
It is well-known that this extension is proper, convex and (weakly) lower semi-continuous (lsc) \cite[Lem.~6.105]{BL18}.
Furthermore, we have for $p \leq \frac{d}{d-1}$ the continuous embedding $\BV(\Omega) \hookrightarrow L^p(\Omega)$ as well as the \textit{Poincaré--Wirtinger inequality}
$\| P_0 f \|_p \leq \TV(f)$, where $P_0 f \coloneqq f - \vert \Omega \vert^{-1}\int_\Omega f \dx x$. 
More precisely, $P_0$ is the projection onto the complement of the subspace $\Pi_0$ of the constant functions.
The projection onto $\Pi_0$ itself is denoted by $Q_0$.
Consequently, $\TV(f)$ is coercive in the sense that $\Vert P_0 f_n \Vert_p \to \infty$ implies $\TV(f_n) \to \infty$.
This can be used to prove coercivity of functionals involving $\TV$ regularization, provided that the remaining terms are coercive with respect to $\Vert Q_0 f_n \Vert_p$.

\section{Regularizing Properties}\label{sec:proper}
Here, we study the regularizing properties of the reconstruction model \eqref{eq:Indirect_Meta_elim} following the considerations in \cite[Sec.~3]{LNOS19} and making the necessary modifications due to additional source term $z$.
\paragraph{General case}
Throughout this section, we assume that $V$ fulfills the regularity requirements from Theorem \ref{thm:DiffeoVelo}, i.e., $V \hookrightarrow C_0^{1,\alpha}(\Omega,\R^d)$ for some $0<\alpha \leq 1$.
Regarding the data fidelity term $\mathcal D$, the forward operator $K$, and the regularizers $\mathcal E_1$, $\mathcal E_2$, we make the following assumptions:
\begin{enumerate}
\item\label{cond:Cont} The operator $K\colon L^2(\Omega) \to Y$ is continuous and weak-weak-continuous, i.e., $x_n \weakly x$ in $L^2(\Omega)$ implies $K(x_n) \weakly K(x)$ in $Y$.
\item The functional $\mathcal D(\cdot, g)$ is weakly lsc for all $g\in Y$ and $D(f,\cdot)$ is continuous for all $f \in Y$.
\item\label{cond:Zero} If $\mathcal D(f,g)=0$, then it holds that $f=g$.
\item\label{cond:InvTria} There exists $C>0$ such that it holds $\vert \mathcal D(f,h)-\mathcal D(f,g) \vert \leq   C \mathcal D(g,h)$ for all $f,g,h \in Y$.
\item\label{cond:coercive} For fixed $g \in Y$, any bounded sequence $\{f_n\}_{n \in \N} \subset L^2(\Omega)$ and any sequence $\{z_n\}_{n \in \N} \subset L^2(\Omega)$ with $\Vert z_n\Vert_2 \to \infty$ it holds that $\mathcal D(K(f_n + z_n),g) + \mathcal E_2(z_n)\to \infty$.
Further, for $\{v_n\}_{n \in \N} \subset \mathcal V$ with $\Vert v_n \Vert_{\mathcal V} \to \infty$ it holds that $\mathcal E_1(v_n) \to \infty$.
This can be interpreted as coercivity in $v$ and $z$.
\item\label{cond:reg} The regularizers $\mathcal E_1$ and $\mathcal E_2$ are weakly lsc.
\end{enumerate}
\begin{remark}
These conditions readily imply that if $\{f_n\}_{n \in \N}, \{g_n\}_{n \in \N}$ are sequences in $Y$ with $f_n \rightharpoonup f$ and $g_n \to g$, then $\liminf_{n \to \infty} \mathcal D(f_n,g) = \liminf_{n \to \infty} \mathcal D(f_n,g_n)$.
Further, it is easy to verify that Conditions \ref{cond:Cont}-\ref{cond:InvTria} are fulfilled if $\mathcal D$ is a metric and $K$ a bounded linear operator.
\end{remark}
First, we prove existence of a minimizer for \eqref{eq:Indirect_Meta_elim}.
\begin{theorem}[Existence of minimizers]\label{thm:exist}
For any $\lambda_1, \lambda_2>0$ and $g \in Y$, the problem \eqref{eq:Indirect_Meta_elim} has a minimizer.
\end{theorem}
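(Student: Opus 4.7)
The plan is to apply the direct method of the calculus of variations. Since all three terms of $J_{\lambda,g}$ are nonnegative, the infimum $m \coloneqq \inf J_{\lambda,g}$ is finite, so we may pick a minimizing sequence $(v_n,z_n) \in \mathcal V \times L^2(\Omega)$ with $J_{\lambda,g}(v_n,z_n) \to m$. The scheme is then (i) use coercivity to extract a weakly convergent subsequence, (ii) use the continuity of $(v,z) \mapsto R_{v,z}$ worked out in Section~\ref{sec:prelim} together with weak-weak continuity of $K$, and (iii) close the argument with weak lower semicontinuity of the three summands.

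For step (i), I would first exploit Condition~\ref{cond:coercive} applied to $\mathcal E_1$: since $\lambda_1 \mathcal E_1(v_n) \le J_{\lambda,g}(v_n,z_n)$ is bounded, the sequence $\{v_n\}$ must be bounded in $\mathcal V$. Passing to a subsequence (not relabeled), we obtain $v_n \weakly v$ in $\mathcal V = L^2([0,1],V)$. By Theorem~\ref{thm:DiffeoVelo} together with the bound on $\{\varphi_{0,v_n}^{-1}(1,\cdot)\}$ in $C^{1,\alpha}(\overline\Omega,\R^d)$ noted right after it, the functions $f_n \coloneqq T \circ \varphi_{0,v_n}^{-1}(1,\cdot)$ form a bounded sequence in $L^2(\Omega)$ (by a change-of-variables estimate, since the Jacobian determinants of the diffeomorphisms $\varphi_{0,v_n}(1,\cdot)$ are uniformly bounded away from $0$ and $\infty$). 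Now I can invoke the combined coercivity condition in Condition~\ref{cond:coercive}: because $\mathcal D(K(f_n+z_n),g) + \lambda_2 \mathcal E_2(z_n) \le J_{\lambda,g}(v_n,z_n)$ stays bounded while $\{f_n\}$ is bounded in $L^2(\Omega)$, the norms $\|z_n\|_2$ cannot tend to infinity. Hence $\{z_n\}$ is bounded in $L^2(\Omega)$, and after passing to a further subsequence, $z_n \weakly z$ in $L^2(\Omega)$.

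For step (ii), the discussion following Theorem~\ref{thm:DiffeoVelo} applies verbatim: from $v_n \weakly v$ and $z_n \weakly z$ we obtain $R_{v_n,z_n} \weakly R_{v,z}$ in $L^2(\Omega)$. By Condition~\ref{cond:Cont}, this yields $K(R_{v_n,z_n}) \weakly K(R_{v,z})$ in $Y$. Finally, step (iii) combines the weak lower semicontinuity of $\mathcal D(\cdot,g)$, $\mathcal E_1$ and $\mathcal E_2$ (Conditions~2 and \ref{cond:reg}) to get
\begin{equation*}
    J_{\lambda,g}(v,z) \le \liminf_{n\to\infty} \bigl[ \mathcal D(K(R_{v_n,z_n}),g) + \lambda_1 \mathcal E_1(v_n) + \lambda_2 \mathcal E_2(z_n) \bigr] = m,
\end{equation*}
so $(v,z)$ is a minimizer.

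The main obstacle is step (i), specifically the boundedness of $\{z_n\}$: Condition~\ref{cond:coercive} is phrased in terms of a splitting $f_n + z_n$, which matches the structure $R_{v,z} = T \circ \varphi^{-1}_{0,v}(1,\cdot) + z$, but one must first bound the deformation part in $L^2(\Omega)$ uniformly in $n$ before the combined coercivity can be invoked. Once that is in place, the rest is a standard weak-compactness/lsc argument.
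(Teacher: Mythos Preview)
Your proof is correct and follows essentially the same route as the paper: bound $\{v_n\}$ via coercivity of $\mathcal E_1$, extract a weak limit, use it to bound the deformation part $f_n = T\circ\varphi_{0,v_n}^{-1}(1,\cdot)$ in $L^2(\Omega)$, then invoke the combined coercivity in Condition~\ref{cond:coercive} to bound $\{z_n\}$, and finish with weak lower semicontinuity. The only cosmetic difference is that the paper obtains boundedness of $\{f_n\}$ from its \emph{strong} $L^2$-convergence (already established in the discussion after Theorem~\ref{thm:DiffeoVelo}), whereas you argue boundedness directly via a change-of-variables estimate; since you invoke that same discussion in step~(ii) anyway, the two arguments are effectively identical.
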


\begin{proof}
Let $\{v_n, z_n\}_{n \in \N} \subset \mathcal V \times L^2(\Omega)$ be a minimizing sequence for \eqref{eq:Indirect_Meta_elim}.
As $\mathcal E_1(v_n) \leq J(v_0,z_0)/\lambda_1$ holds for all $n \in \N$, the sequence $\{v_n\}_{n \in \N}$ is bounded in $\mathcal V$ by Condition~\ref{cond:coercive}.
Hence, there exists a subsequence, also denoted with $\{v_n\}_{n \in \N}$, such that $v_n \weakly v^*$ for some $v^*\in \mathcal V$.
By Theorem~\ref{thm:DiffeoVelo} and the discussion thereafter, the sequence \smash{$f_n \coloneqq T \circ \varphi_{0,v_n}^{-1}(1,\cdot)$} converges strongly in $L^2(\Omega)$ to $f \coloneqq T \circ \varphi_{0,v^*}^{-1}(1,\cdot)$ and is thus bounded.
By Condition~\ref{cond:coercive}, we conclude that also $\{z_n\}_{n \in \N}$ is bounded.
Therefore, we can extract a weakly convergent subsequence, also denoted with $\{z_n\}_{n \in \N}$, with limit $z^* \in L^2(\Omega)$.
As $K$ is weak-weak-continuous (Condition~\ref{cond:Cont}), we obtain $K(R_{v_n,z_n}) \rightharpoonup K(R_{v^*,z^*})$.
Since $\mathcal D(\cdot,g)$, $\mathcal E_1$ and $\mathcal E_2$ are all weakly lsc, we get $J(v^*,z^*) \leq \liminf_{n \to \infty} J(v_n, z_n) = \inf_{v,z} J(v,z)$.
Hence, $(v^*, z^*)$ is a minimizer for \eqref{eq:Indirect_Meta_elim}.
\end{proof}

Note that \eqref{eq:Indirect_Meta_elim} is non-convex.
Hence, we cannot expect uniqueness of the minimizer.
If $K$ is nonlinear, it appears sensible to require that it is completely continuous, i.e., that it maps weakly convergent sequences to strongly convergent ones.
In this case, we can relax the constraints for $\mathcal D$ by considering operators that are only lsc.
Next, we provide a result regarding the continuous dependence of minimizers for \eqref{eq:Indirect_Meta_elim} on the data $g \in Y$.

\begin{theorem}[Dependence on the data]\label{thm:stab}
Consider a sequence $g_n \to g$ in $Y$.
For each $n \in \mathbb{N}$, let  $(v_n, z_n)\in \mathcal V\times L^2(\Omega)$ be a minimizer of the functional $J_n \coloneqq J_{\lambda, g_n}$, where $\lambda = (\lambda_1, \lambda_2)$.
Then, there exists a subsequence of $\{v_n, z_n\}_{n \in \N}$ that converges weakly to a minimizer of $J \coloneqq J_{\lambda, g}$. 
\end{theorem}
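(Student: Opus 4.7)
The plan is to mimic the existence proof of Theorem~\ref{thm:exist}, but with an extra layer of care because the data now moves with $n$. Concretely, I would proceed in three stages: (i) derive a uniform bound on $J_n(v_n,z_n)$ by comparing to a fixed competitor; (ii) extract weak limits $v^\ast$ and $z^\ast$, where the bound on $\{z_n\}$ requires bridging from $g_n$ to $g$ via Condition~\ref{cond:InvTria}; (iii) identify $(v^\ast,z^\ast)$ as a minimizer of $J$ through the same lower-semicontinuity framework as in Theorem~\ref{thm:exist}.

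For stage~(i), fix an arbitrary $(\tilde v,\tilde z)\in \mathcal V\times L^2(\Omega)$. Continuity of $\mathcal D(K\circ R_{\tilde v,\tilde z},\cdot)$ in its second argument (Condition~2) together with $g_n\to g$ yields $J_n(\tilde v,\tilde z)\to J(\tilde v,\tilde z)$, so the sequence $J_n(\tilde v,\tilde z)$ is uniformly bounded. Optimality of $(v_n,z_n)$ transfers this bound to $J_n(v_n,z_n)$, and coercivity of $\mathcal E_1$ (Condition~\ref{cond:coercive}) forces $\{v_n\}$ to be bounded in $\mathcal V$. Extracting a weakly convergent subsequence $v_{n_k}\weakly v^\ast$ and invoking Theorem~\ref{thm:DiffeoVelo}, the sequence $f_{n_k}\coloneqq T\circ \varphi_{0,v_{n_k}}^{-1}(1,\cdot)$ converges strongly in $L^2(\Omega)$ to $f^\ast\coloneqq T\circ\varphi_{0,v^\ast}^{-1}(1,\cdot)$ and in particular is bounded.

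The delicate point in stage~(ii) is the bound on $\{z_{n_k}\}$, because coercivity in Condition~\ref{cond:coercive} is phrased for a \emph{fixed} target $g$ rather than a moving $g_{n_k}$. I bridge this using Condition~\ref{cond:InvTria} in the form
\begin{equation*}
\mathcal D\bigl(K(f_{n_k}+z_{n_k}),g\bigr)\leq \mathcal D\bigl(K(f_{n_k}+z_{n_k}),g_{n_k}\bigr)+C\,\mathcal D(g_{n_k},g),
\end{equation*}
and note that $\mathcal D(g_{n_k},g)\to 0$ by continuity of $\mathcal D(g,\cdot)$ together with $\mathcal D(g,g)=0$ (which holds in the regularization setting, cf.\ the remark). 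If $\|z_{n_k}\|_2\to\infty$ along some subsequence, Condition~\ref{cond:coercive} would force the right-hand side, and hence $J_{n_k}(v_{n_k},z_{n_k})$, to diverge, contradicting stage~(i). Consequently $\{z_{n_k}\}$ is bounded and, after passing to a further subsequence, $z_{n_k}\weakly z^\ast$ in $L^2(\Omega)$.

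For stage~(iii), weak-weak continuity of $K$ (Condition~\ref{cond:Cont}) together with $R_{v_{n_k},z_{n_k}}\weakly R_{v^\ast,z^\ast}$, established in the preliminaries, gives $K\circ R_{v_{n_k},z_{n_k}}\weakly K\circ R_{v^\ast,z^\ast}$. For any competitor $(v,z)$, combining weak lower semicontinuity of $\mathcal E_1$, $\mathcal E_2$ and $\mathcal D(\cdot,g)$ with the remark (which yields $\liminf_k \mathcal D(K\circ R_{v_{n_k},z_{n_k}},g_{n_k}) = \liminf_k \mathcal D(K\circ R_{v_{n_k},z_{n_k}},g)$) gives $J(v^\ast,z^\ast)\leq \liminf_k J_{n_k}(v_{n_k},z_{n_k}) \leq \liminf_k J_{n_k}(v,z) = J(v,z)$, where the final equality uses continuity of $\mathcal D(K\circ R_{v,z},\cdot)$. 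The principal obstacle throughout is stage~(ii): without the quasi-reverse triangle inequality in Condition~\ref{cond:InvTria}, coercivity in $z$ could not be transported from the fixed limit $g$ to the perturbed data $g_n$, and the whole argument would stall.
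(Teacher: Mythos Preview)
Your proposal is correct and follows essentially the same three-stage argument as the paper: bound $J_n(v_n,z_n)$ via a fixed competitor, use Condition~\ref{cond:InvTria} to transfer the data term from $g_n$ to $g$ so that Condition~\ref{cond:coercive} yields a bound on $\{z_n\}$, and conclude by weak lower semicontinuity together with continuity of $\mathcal D$ in its second argument. The only cosmetic differences are that the paper fixes the competitor $(0,0)$ rather than a generic $(\tilde v,\tilde z)$, and that it re-derives the $\liminf$ identity you cite from the remark; one minor caution is that for the boundedness step you actually only need $\mathcal D(g_{n_k},g)$ to be \emph{bounded} (which follows from continuity of $\mathcal D(g,\cdot)$ alone), not that it tends to zero, so your appeal to $\mathcal D(g,g)=0$ is an unnecessary extra assumption.
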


\begin{proof}
As $(v_n, z_n)$ minimizes $J_n$, it holds
\begin{equation}
    \lambda_1 \mathcal E_1(v_n) + \lambda_2 \mathcal E_2(z_n) \leq J_n(v_n, z_n) \leq J_n(0,0) =\mathcal D\bigl(K(R_{0,0}), g_n\bigr) + C \to \mathcal D\bigl(K(R_{0,0}), g\bigr) + C,
\end{equation}
where the convergence follows by continuity of $\mathcal D$ in its second entry (Condition~\ref{cond:Cont}).
Similarly as in Theorem~\ref{thm:exist}, we choose a subsequence of $\{v_n\}_{n \in \N}$ with limit $v^*$ such that \smash{$f_n \coloneqq T \circ \varphi_{0,v_n}^{-1}(1,\cdot)$} converges strongly in $L^2(\Omega)$ to $f \coloneqq T \circ \varphi_{0,v^*}^{-1}(1,\cdot)$, i.e., $\{f_n\}_{n \in \N}$ is bounded.
By Condition~\ref{cond:InvTria}, it holds that
\begin{equation}\label{eq1.1}
\mathcal D\bigl(K(f_n+z_n),g\bigr) \leq \mathcal D\bigl(K(f_n+z_n),g_n\bigr) + C \mathcal D(g,g_n).
\end{equation} 
Now, Condition~\ref{cond:coercive} implies that also $\{z_n\}_{n \in \N}$ is bounded.
Hence, we may extract a weakly convergent subsequence $\{v_n, z_n\}_{n \in \N}$ with limit $(v^*, z^*) \in \mathcal V \times L^2(\Omega)$ and $R_{v_n,z_n} \weakly R_{v,z}$ in $L^2(\Omega)$.
By incorporation of \eqref{eq1.1}, we get that it holds 
\begin{equation}\label{eq1}
\mathcal D\bigl(K(R_{v^*,z^*}), g\bigr) \leq \liminf_{n \to \infty} \mathcal D\bigl(K(R_{v_n,z_n}), g\bigr) \leq \liminf_{n \to \infty} \mathcal D\bigl(K(R_{v_n,z_n}), g_n\bigr).
\end{equation} 
Now, let $(\tilde v, \tilde z)\in \mathcal V \times L^2(\Omega)$ be arbitrary.
By utilizing \eqref{eq1} and the weak lower semi-continuity of $\mathcal E_1$ and $\mathcal E_2$, we have
\begin{align}
J(v^*,z^*) \leq \liminf_{n \to \infty} J_n(v_n,z_n) \leq  \liminf_{n \to \infty} J_n(\tilde v, \tilde z).
\end{align}
The continuity of $\mathcal D$ implies $\mathcal D(K(R_{\tilde v, \tilde z}),g_n) \to \mathcal D(K(R_{\tilde v, \tilde z}),g)$.
Thus, it holds that 
\begin{equation}
    J(v^*,z^*) \leq \liminf_{n \to \infty} J_n(\tilde v, \tilde z)= J(\tilde v, \tilde z),
\end{equation}
which concludes the proof.
\end{proof}
We conclude our investigation with a convergence result for vanishing noise, provided that we use an appropriate parameter choice rule $\lambda = \gamma(\delta)$.
This enables us to approximate solutions of~\eqref{eq:operatorequation}.

\begin{theorem}[Convergence for vanishing noise]
\label{theorem: vanishing noise}
Let $T \in L^2(\Omega)$, $g\in Y$ and assume that there exists $(\hat{v},\hat{z}) \in \mathcal V\times  L^2(\Omega)$ with $K(R_{\hat{v},\hat{z}})=g$.
Further, assume that $\gamma_i\colon\mathbb{R}_{>0}\to \mathbb{R}_{>0}$, $i=1,2$, satisfy $\gamma_i(\delta)\to 0$, $\delta/\gamma_1(\delta) \to 0$ for $\delta \to 0$ and $\gamma_2(\delta)/\gamma_1(\delta) \to c>0$.
Choose a sequence $\{\delta_n\}_{n \in \N} \subset \mathbb{R}_{>0}$ that converges to zero, and a sequence $\{g_n\}_{n \in \N} \subset Y$ with $\mathcal D(g,g_n)\leq \delta_n$.
Furthermore, let $(v_n,z_n)$ be a minimizer of $J_n \coloneqq J_{\lambda_n,g_n}$ with $\lambda_n \coloneqq(\lambda_{1,n}, \lambda_{2,n})=(\gamma_1(\delta_n),\gamma_2(\delta_n))$ for each $n\in \mathbb{N}$.
Then there exists a subsequence of $\{v_n,z_n\}_{n \in \N}$ that converges weakly to a point $(v^*,z^*)$ with $K(R_{v^*,z^*})=g$.
\end{theorem}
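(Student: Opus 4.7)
The plan is to follow the standard vanishing-noise template from Tikhonov regularization theory, adapted to the joint variable $(v,z)$, and reuse the compactness apparatus already deployed in Theorems~\ref{thm:exist} and~\ref{thm:stab}. First I would exploit the optimality of $(v_n,z_n)$ by testing $J_n$ against the source $(\hat v, \hat z)$. Since $K(R_{\hat v, \hat z})=g$ and $\mathcal D(g,g_n)\le\delta_n$, this yields
\begin{equation*}
\mathcal D\bigl(K(R_{v_n,z_n}),g_n\bigr) + \lambda_{1,n}\mathcal E_1(v_n) + \lambda_{2,n}\mathcal E_2(z_n) \le \delta_n + \lambda_{1,n}\mathcal E_1(\hat v) + \lambda_{2,n}\mathcal E_2(\hat z).
\end{equation*}
Dividing by $\lambda_{1,n}=\gamma_1(\delta_n)$ and using $\delta_n/\gamma_1(\delta_n)\to 0$ together with $\gamma_2(\delta_n)/\gamma_1(\delta_n)\to c$, I would conclude that $\{\mathcal E_1(v_n)\}_{n\in\N}$ and $\{\mathcal E_2(z_n)\}_{n\in\N}$ are bounded, and that the data term $\mathcal D(K(R_{v_n,z_n}),g_n)$ in fact tends to zero.

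Next I would extract compactness. Condition~\ref{cond:coercive} with the bound on $\mathcal E_1(v_n)$ gives that $\{v_n\}$ is bounded in $\mathcal V$, so a subsequence satisfies $v_n\rightharpoonup v^*$. Theorem~\ref{thm:DiffeoVelo} and the subsequent discussion then yield the strong $L^2$-convergence $f_n\coloneqq T\circ\varphi_{0,v_n}^{-1}(1,\cdot)\to T\circ\varphi_{0,v^*}^{-1}(1,\cdot)$, so $\{f_n\}$ is bounded in $L^2(\Omega)$. To bound $\{z_n\}$, I would use Condition~\ref{cond:InvTria} to pass from the data-term bound at $g_n$ to one at $g$:
\begin{equation*}
\mathcal D\bigl(K(f_n+z_n),g\bigr) \le \mathcal D\bigl(K(R_{v_n,z_n}),g_n\bigr) + C\,\mathcal D(g_n,g),
\end{equation*}
so the left-hand side stays bounded. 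Combined with the boundedness of $\mathcal E_2(z_n)$, Condition~\ref{cond:coercive} forces $\|z_n\|_{L^2}$ to be bounded, and a further subsequence gives $z_n\rightharpoonup z^*$. The discussion after Theorem~\ref{thm:DiffeoVelo} then delivers $R_{v_n,z_n}\rightharpoonup R_{v^*,z^*}$ in $L^2(\Omega)$.

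Finally I would check the feasibility $K(R_{v^*,z^*})=g$. Weak-weak continuity of $K$ together with weak lower semi-continuity of $\mathcal D(\cdot,g)$ give
\begin{equation*}
\mathcal D\bigl(K(R_{v^*,z^*}),g\bigr) \le \liminf_{n\to\infty}\mathcal D\bigl(K(R_{v_n,z_n}),g\bigr) \le \liminf_{n\to\infty}\bigl[\mathcal D\bigl(K(R_{v_n,z_n}),g_n\bigr) + C\delta_n\bigr] = 0,
\end{equation*}
where I have already shown the first summand vanishes. Condition~\ref{cond:Zero} then forces $K(R_{v^*,z^*})=g$, which closes the argument.

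The delicate step is the boundedness of $\{z_n\}$: the TV-type regularizer $\mathcal E_2$ on its own is not coercive (constants lie in its kernel), so a direct argument from the a priori bound on $\mathcal E_2(z_n)$ is not available. This is precisely why Condition~\ref{cond:coercive} is formulated as joint coercivity of the data-fidelity-plus-regularizer combination on sequences with bounded deformation part; the strong convergence of $f_n$ (rather than mere weak convergence) supplied by Theorem~\ref{thm:DiffeoVelo} is crucial here. Beyond that, the argument is a routine chase through optimality, compactness, and the weak lower semi-continuity properties already used in Theorems~\ref{thm:exist} and~\ref{thm:stab}.
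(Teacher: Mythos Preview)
Your proposal is correct and follows essentially the same approach as the paper's proof: test optimality against $(\hat v,\hat z)$ to bound $\mathcal E_1(v_n)$, $\mathcal E_2(z_n)$ and the data term, extract a weakly convergent subsequence via Condition~\ref{cond:coercive} and Theorem~\ref{thm:DiffeoVelo} (with the boundedness of $\{z_n\}$ handled as in Theorem~\ref{thm:stab}), and then pass to the limit using weak lower semi-continuity to force $\mathcal D(K(R_{v^*,z^*}),g)=0$. Your write-up is in fact more explicit than the paper's about why the data term vanishes and about the role of Condition~\ref{cond:InvTria} in transferring the bound from $g_n$ to $g$, and your closing remark on why joint coercivity (rather than coercivity of $\mathcal E_2$ alone) is needed is a correct and useful observation.
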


\begin{proof}
For every $n \in \mathbb{N}$, it holds that
\begin{align}
\mathcal E_1(v_n) + \frac{\lambda_{2,n}}{\lambda_{1,n}} \mathcal E_2(z_n) \leq  \frac{1}{\lambda_{1,n}} J_n(v_n, z_n) \leq \frac{1}{\lambda_{1,n}} J_n(\hat{v},\hat{z}) 
&\leq \frac{1}{\lambda_{1,n}} \mathcal D(g,g_n) +  \mathcal E_1(\hat v) + \frac{\lambda_{2,n}}{\lambda_{1,n}} \mathcal E_2(\hat z) \notag\\
& \leq \frac{\delta_n}{\lambda_{1,n}} +  \mathcal E_1(\hat v) + \frac{\lambda_{2,n}}{\lambda_{1,n}} \mathcal E_2(\hat z).
\end{align}
Hence, $\{v_n\}_{n \in \N}$ and $\{\mathcal E_2(z_n)\}_{n \in \N}$ are bounded.
Further, $\mathcal D(K(R_{v_n.z_n}),g_n)$ is bounded, which as in Theorem~\ref{thm:stab} implies that $\{z_n\}_{n \in \N}$ is bounded.
Therefore, $\{v_n,z_n\}_{n \in \N}$ is bounded and possesses a weakly convergent subsequence (without relabeling) with limit $(v^*,z^*)$, for which the following estimate holds true
\begin{align}
\mathcal D\bigl(K(R_{v^*,z^*}),g\bigr)&\leq \liminf_{n \to \infty} \mathcal D\bigl(K(R_{v_n,z_n}),g\bigr)
			 \leq \liminf_{n \to \infty} \mathcal D\bigl(K(R_{v_n,z_n}),g_n\bigr) \leq \liminf_{n \to \infty} J_n(v_n,z_n)\notag\\
			 & \leq  \liminf_{n \to \infty} J_n(\hat{v},\hat{z})
			 \leq  \lim_{n \to \infty}  \delta_n + \lambda_{1,n}\mathcal E_1(\hat{v}) + \lambda_{2,n}\mathcal E_2(\hat{z})
			 =0.
\end{align}
Finally, by using Condition~\ref{cond:Zero}, we deduce $K(R_{v^*,z^*})=g$.
\end{proof}

\paragraph{Specific setting}
In our numerical experiments, we work with 2D images, i.e., $\Omega = (0,1)^2 \subset \R^2$, and we choose $\mathcal E_2(z) \coloneqq \TV(z)$.
Further, the data fidelity term $\mathcal D$ is chosen for all examples with synthetic data as $\mathcal D (f,g) = \tfrac12 \Vert f -g \Vert^2$, and the regularizer for $v$ as
\begin{equation}
\label{eq: rep diff reg}
\mathcal E_1 (v)\coloneqq \frac{1}{2}\int_0^T \int_\Omega \|B v(t,x)\|^2 \dx x \dx t,
\end{equation}
where $B$ is a differential operator such that Condition~\eqref{cond:coercive} is satisfied, i.e., $\mathcal E_1$ is coercive in $v$.
Since the Sobolev space $V = H_0^3(\Omega, \mathbb{R}^2)$ can be continuously embedded into $C^{1,0.5}(\Omega,\mathbb{R}^2)$, we have to choose a matrix $B$ that encodes all third-order derivatives in space. 
We want to remark that our numerical approach in Section \ref{sec:discrete} works for any regularizers of the form \eqref{eq: rep diff reg}, even if it is not coercive.

Now, the specification of problem \eqref{eq:Indirect_Meta_elim} reads
\begin{align}\label{eq:Indirect_Meta_speci}
\min_{(v,z ) \in \mathcal V \times L^2(\Omega)}  \frac12 \Vert K \circ R_{v,z} - g \Vert^2 + \frac{\lambda_1}{2}\int_0^T \int_\Omega \|B v(t,x)\|^2 \dx x \dx t + \lambda_2\mathcal \TV(z).
\end{align}
In case that $K\colon L^2(\Omega) \to Y$ is linear, bounded and does not vanish for constant functions, the assumptions underlying our theoretical investigations in the previous paragraph are satisfied:
\begin{itemize}
\item As $K$ is linear and bounded, it is also continuous and weak-weak-continuous.
\item Due to the norm properties, $\mathcal D$ is  weakly lsc and continuous in both entries.
Further, Condition~\ref{cond:Zero} and \ref{cond:InvTria} also follow from the norm properties.
\item We show Condition \ref{cond:coercive} explicitly.
Due to our choice of $B$, the regularizer $\mathcal E_1$ is coercive.
Hence, it remains to show the first part of the condition.
Let $\{f_n\}_n$ be bounded and let $\Vert z_n \Vert \to \infty$.
Then, either $\| P_0 z_n \| \to \infty$ or $\Vert Q_0 z_n \Vert \to \infty$.
In the first case, we have that $\mathcal E_2(z_n) \to \infty$ and the claim follows by positivity of $\mathcal D$.
For the second one, we get due to linearity of $K$ that
\begin{equation}
    K(f_n + z_n) = K(f_n + P_0 z_n) + K (Q_0 z_n),
\end{equation}
where the first term remains bounded.
Since $\Vert K (Q_0 z_n) \Vert \to \infty$ due to the assumption that $K$ does not vanish for constant functions, we conclude $\Vert K(f_n + z_n) - g \Vert \to \infty$ and the claim follows by positivity of $\mathcal E_2$.

\item The regularizers $\mathcal E_1$ and $\mathcal E_2$ are chosen such that they are weakly lsc.
\end{itemize}
\begin{remark}\label{rem:NCC1}
Although this is not covered theoretically, we use a normalized-cross-correlation-based distance $\mathcal D_{\mathrm{NCC}} \colon Y\setminus\{0\} \times Y\setminus\{0\} \to [0,1]$ given by
\begin{equation}
    \mathcal D_{\mathrm{NCC}}(f,g) = 1 - \frac{\langle f,g \rangle^2}{\Vert f \Vert_{Y}^2 \Vert g \Vert_{Y}^2}
\end{equation}
for our numerical experiments with real data as proposed in \cite{LNOS19}.
This modification is necessary as the gray value scale between template and target is often different for real data.
It holds that $\mathcal D_{\mathrm{NCC}}(f,g) =0$ only implies $f = c g$ with $c \in \R$, i.e., Condition~\ref{cond:Zero} is violated.  
Further, we do not have the required coercivity with respect to $z$ (Condition~\ref{cond:coercive}).
Hence, this setup is not covered by our theory.
Nevertheless, our numerical experiments indicate that $\mathcal D_{\mathrm{NCC}}$ combined with $\mathcal E_1 = \TV$ leads to good reconstructions.

If we choose \smash{$\mathcal{E}_2=\|\cdot\|_{L^2(\Omega)}^2$} instead, we can derive the same theoretical results as for $\mathcal D (f,g) = \Vert f -g \Vert^2$.
In this case, Condition \ref{cond:InvTria} and \ref{cond:coercive} are obsolete, as they are only required to infer boundedness of $z \in L^2(\Omega)$ from the boundedness of the objective, which now follows directly. The remaining conditions are met by $\mathcal D_{\mathrm{NCC}}$, except for Condition~\ref{cond:Zero}.
Hence, the convergence in Theorem~\ref{theorem: vanishing noise} holds only up to a scalar.
\end{remark}

\section{Numerical Approach}\label{sec:discrete}
In this section, we present a numerical scheme for solving \eqref{eq:Indirect_Meta_speci}. 
Our approach is based on the Lagrangian method developed in \cite{LNOS19,ManRut17} as well as the iPALM algorithm \cite{BST14,PS17}.
The actual implementation relies upon the FAIR toolbox \cite{Mod2004}.

So far, we have not specified the differential operator $B$ in \eqref{eq:Indirect_Meta_speci}.
As our approach is guided by the modular framework of \cite{LNOS19,ManRut17}, we could use \textit{curvature regularization}, defined by $B= \nabla_x$, or \textit{diffusion regularization}, where $B=\Delta_x$.
These choices correspond to the $H^1$ and $H^2$ semi-norm, respectively.
However, they do not satisfy the coercivity requirement, i.e., Condition~\ref{cond:coercive}.
Instead, we use the $H^3$ semi-norm, which has been proposed in \cite{LNOS19}.
Although our theoretical investigations for \eqref{eq:Indirect_Meta_speci} in Section~\ref{sec:proper} only hold for the 2D case, we provide the algorithm in general form, and also deploy it for 3D images later.
In the following, we briefly sketch the components of our approach.

\subsection{Discretization}\label{sec:discretization}
We pursue a \textit{discretize-then-optimize} strategy.
By partitioning every coordinate in $m$ blocks of length $h_X= 1/m$, the domain $(0,1)^d$ is split into $m^d$ equally sized cubes.
Then, the template $T \in L^2(\Omega)$ and the source $z \in L^2(\Omega)$ are both sampled at the cell-centered nodes \smash{$\mathbf{x_c} \in \mathbb{R}^{m^d}$}, resulting in discrete versions $\mathbf{T(x_c)}$ and \smash{$\mathbf{z(x_c)} \in \mathbb{R}^{m^d}$}, respectively.
Their values are interpolated by cubic B-splines if off grid values are required.
Further, the time domain $[0,1]$ is uniformly partitioned into $m_t$ units of length $h_t=1/m_t$.
Then, the velocity $v\colon [0,1] \times \Omega\to \mathbb{R}^d$ is sampled over cell-centered locations in space and at the nodes in time, resulting in a discrete velocity vector $\mathbf v \in \mathbb{R}^N$ with $N=d \cdot (m_t+1)\cdot m^d$.

\paragraph{Lagrangian solver for $\mathbf R_{\mathbf v, \mathbf z}$}
In order to compute the solution map $(\mathbf v, \mathbf z) \mapsto \mathbf R_{\textbf v, \mathbf z}$, we need to solve the flow equation \eqref{eq:Lag_mod}.
Here, every function $\varphi_{s,v}(\cdot,x_0)\colon[0, 1] \to \Omega$ can be interpreted as a trajectory of some particle with position $x_0$ at initial time $s$.
We compute $\mathbf R_{\mathbf v, \mathbf z}$ as follows:

\begin{enumerate}
    \item \textit{Computing the characteristics}: For numerically solving \eqref{eq:Lag_mod}, we employ a fourth order Runge--Kutta scheme (RK4).
    Since we require $\varphi_{1,\mathbf v}(0,\cdot)$, we solve \eqref{eq:Lag_mod} backwards in time with $N_t$ equidistant steps of size $\Delta t= -\frac{1}{N_t}$ and initial condition $\varphi_{1,\mathbf v}(1,\mathbf{x_c})= \mathbf{x_c}$.
    To simplify the notation, the remaining discussion is instead based on the explicit Euler scheme 
    \begin{equation}
    \label{eq:euler_step}
    \varphi_{1,\mathbf v}(t_{k+1}, \mathbf{x_c}) = \varphi_{1,\mathbf v}(t_k, \mathbf{x_c}) + \Delta t\, I \bigl( \mathbf{v} , t_k, \varphi_{1,\mathbf v}(t_{k}, \mathbf{x_c})\bigr),
    \end{equation}
    where $k=0,\ldots, N_t-1$ and $t_k=1 - k\Delta t$.
    Here, $I$ interpolates the velocity at time $t_k$ and transformed positions $\varphi_{1,\mathbf v}(t_{k}, \mathbf{x_c})$.
    This is necessary as the points $\varphi_{1,\mathbf v}(t_{k}, \mathbf{x_c})$ are in general not on the grid.
    Note that the time discretization parameters $N_t$ and $m_t$ differ in general:
    The first determines the accuracy of the ODE solver, whereas the second is related to the discretization.
    
    \item \textit{Deforming the template $T$:}
    Based on the output $\varphi_{1,\mathbf v}(0, \mathbf{x_c})$ of the RK4-scheme, we can evaluate the template $\mathbf T$ at the deformed grid using interpolation.
    
    \item \textit{Computing $\mathbf R_{\mathbf v, \mathbf z}$:}
    Finally, we add the source term $z$ and the deformed template $T$, i.e., $\textbf R_{\mathbf v,\mathbf z}(\mathbf{x_c}) = \mathbf T \circ \varphi_{1,\mathbf v}(0, \mathbf{x_c}) + \mathbf z(\mathbf{x_c})$.
\end{enumerate}
Note that actually all steps of this procedure are independent of the forward operator $K$, the data fidelity term $\mathcal D$ and the chosen regularizers $\mathcal E_1$, $\mathcal E_2$.
Further, the gradient $\nabla_{\mathbf{v}} \mathbf R_{\mathbf{v},\mathbf{z}}$ can be explicitly computed within the Runge--Kutta scheme, which is important for computational efficiency.

\paragraph{Forward operator}
Let us denote by $\mathbf{K}\colon \R^{m^d} \to \R^{M}$, $M \in \NN$, a finite-dimensional, Fr\'{e}chet differentiable approximation of the operator $K\colon L^2(\Omega, \R) \to Y$.
With the application to CT in mind, we discuss a discretization of the $d$-dimensional Radon transform.
More generally, if a discretization $\mathbf{K}$ of some operator $K$ is given, we can simply insert it into the model \eqref{eq:Indirect_Meta_speci}.

For given $\theta \in \mathbb{S}^{n-1}$ and  $s \in \mathbb{R}$, the Radon transform of $f\colon\mathbb{R}^d \to \mathbb{R}$ is defined pointwise by
\begin{equation}
    Rf(\theta,s) = \int_{\theta^\perp} f(s\theta+y)\dx y,
\end{equation}
where $\theta^\perp$ is the orthogonal complement of $\text{span}\{\theta\}$ \cite[Chap.~2]{N2001}.
The Radon transform is linear and thus also Fréchet differentiable.
For $f \in L^2(\Omega)$, the value $R(f) \in Y$ is a function that maps from the cylinder $\mathbb{S}^{d-1} \times \mathbb{R}$ to $\mathbb{R}$.
We discretize this cylinder as follows: Take $p \in \mathbb{N}$ directions in  $\mathbb{S}^{d-1}$.
For simplicity, we say that we take one measurement in each direction.
Furthermore, we take the interval $(0,1)$ instead of $\mathbb{R}$, and split it into $q\in \mathbb{N}$ equally sized cells of length $1/q$, as we have also done with $\Omega$.
Depending on the dimension $d$ and the diameter of $\Omega$, the intervals length requires adjustment.
Then, the data is sampled at cell-centered points $\mathbf{y_c}$ for each angle, resulting in vectors $\mathbf{g_i(\mathbf{y_c})}\in \mathbb{R}^q$, $i=1, \ldots,p$, and the entire data vector is represented as $\mathbf{g} \in \mathbb{R}^M$ with $M=p\cdot q$.
A discrete Radon transform is implemented for both CPUs and GPUs as part of the ASTRA toolbox \cite{AstraGPU,Astra2,Astra1}.

\paragraph{Data fidelity term and regularizers}
We discretize the data fidelity term $\mathcal D(x,y)=\frac{1}{2} \|x-y\|_2^2$ using the midpoint-rule for numerical integration, which results in
\begin{equation}
\label{discrete SSD}
\mathcal D_{SSD}(\mathbf{x}, \mathbf{y})= \frac{1}{2} h_Y (\mathbf{x}-\mathbf{y})^T( \mathbf{x}-\mathbf{y})
\end{equation}
with $h_Y=1/q$ and $\mathbf{x}, \mathbf{y} \in \mathbb{R}^M$, see also \cite[Chap.~6.2]{Mod2004}.
As we consider the Radon transform for few directions $\theta \in \mathbb{S}^{n-1}$ only, we disregard the necessary modifications related to the integration over the unit sphere.
Similarly, the discretization of the regularizer $\mathcal E_1$ is given by
\begin{equation}
\label{discrete Reg}
\mathcal E_1(\mathbf{v}) = \frac{1}{2} h_t h_X^d \mathbf{v}^T \mathbf{B}^T\mathbf{B}\mathbf{v},
\end{equation}
where $\mathbf{B}$ is a finite-difference counterpart of the chosen differential operator $B$.
To mitigate boundary effects caused by the discretization of $B$, we pad the spatial domain and impose zero Neumann boundary conditions.
For the $\TV$ regularizer, the norm is again discretized based on the midpoint-rule and the gradient is replaced by a finite difference counterpart, resulting in 
\begin{equation}\label{eq:disc_TV}
    \TV(\mathbf{z}) = h_X^d \sum_{i=1}^{ m^d} \Vert (\nabla_{h_X}(\mathbf{z}))_i\Vert.
\end{equation}
In our experiments, we employ backward differences and the boundary is extended by 0. 
To this end, we denote by $z\in \bigotimes_{i=1}^d\mathbb{R}^m$ the tensor representation of $\mathbf{z}\in \mathbb{R}^{m^d}$.
For any $i=1,...,m^d$, $\mathbf{z}_i$ corresponds to $z_{(i)}=z_{i_1,...,i_d}$ with $i_k=1,...m$.
Then, it holds that $(\nabla_{h_X}(\mathbf{z}))_i=  (\partial_k z_{(i)})_{k=1}^d$ with
\begin{equation}\label{eq:disc_grad}
    (\partial_{k} z_{(i)}) =\begin{cases} \frac{1}{h_x}\left( z_{i_1,..,i_{k}+1,..,i_d}-z_{(i)}\right) & \text{if } i_k < m,\\ 0 & \text{else.} \end{cases}
\end{equation}
\subsection{iPALM}
Putting all parts from Section \ref{sec:discretization} together, we obtain the discrete problem 
\begin{equation}\label{eq:Disc_Prob}
 \min_{(\mathbf{v},\mathbf{z})  \in \mathbb{R}^N \times \mathbb{R}^{m^d}} \mathcal D_{SSD} \bigl(\mathbf{K}(\mathbf R_{\mathbf v,\textbf z}),\mathbf{g}\bigr) + \frac{\lambda_1}{2} h_t h_X^d \mathbf{v}^T \mathbf{B}^T\mathbf{B}\mathbf{v} + \lambda_2h_X^d \sum_{i=1}^{d\cdot m^d} \Vert (\nabla_{h_X}(\mathbf{z}))_i\Vert.
\end{equation}
In the following, we deploy the \textit{inertial proximal alternating linearized minimization} (iPALM) algorithm \cite{BST14,PS17} for solving \eqref{eq:Disc_Prob}.
This scheme can solve generic problems of the form 
\begin{equation}
\label{PALM_problem}
\min_{x,y \in E_1\times E_2} \Psi(x,y) \coloneqq G_1(x) + G_2(y) +H(x,y),
\end{equation}
where $E_1$, $E_2$ are Euclidean spaces, $H \in C^1(E_1 \times E_2)$ and $G_i \in \Gamma_0(E_i)$, $i=1,2$, namely the proper, convex, and lsc functions on $E_i$.
The corresponding iterations are given by
\begin{align}
\begin{aligned}\label{PALM_x}
\bar{x}_k & = x_k+\alpha_k(x_k-x_{k-1})\\
x_{k+1} &= \text{prox}_{ \sigma_k^1 G_1} \Bigl( \bar{x}_k - \frac{1}{\sigma_k^1} \nabla_x H(\bar{x}_k, y_k) \Bigr)\\
\bar{y}_k & = y_k+\alpha_k(y_k-y_{k-1})\\
y_{k+1} &= \text{prox}_{ \sigma_k^2 G_2} \Bigl( \bar{y}_k - \frac{1}{\sigma_k^2} \nabla_y H(x_{k+1}, \bar{y}_k) \Bigr),
\end{aligned}
\end{align}
where $\prox_{\tau f} (x) = \argmin_{y} \frac12 \|x-y\|_2^2 + \tau f(y)$ is the proximal mapping of $f$.
The stepsizes $\sigma^1_k,\sigma_k^2>0$ are chosen according to the respective partial Lipschitz-constants of $\nabla H$, and $\alpha_k>0$ is an inertia parameter, which helps to escape from local minima and boosts the convergence speed.
In \cite{PS17}, the convergence of the iterations \eqref{PALM_x} is proven under the assumption that the objective has the \textit{Kurdyka–Łojasiewicz (KL) property}. 
Among others, this property holds for \textit{semi algebraic} functions, which include real polynomials, the $\| \cdot\|_p$-norm with rational, non-negative $p$, indicator-functions of semi-algebraic sets, as well as functions of the form $x \mapsto \sup\{g(x,y) \colon y \in S\}$, where $S$ is semi-algebraic and $g$ is a semi-algebraic function, see \cite{BST14}.
Furthermore, compositions of semi-algebraic functions are semi-algebraic.

\begin{theorem}[{\cite[Thm.~4.1]{PS17}}]\label{thm:palm}
	Let $E_1, E_2$ be Euclidean spaces, $H \in C^1(E_1 \times E_2)$, and
	$G_i\in \Gamma_0(E_i)$, $i=1,2$, such that $\Psi$ in \eqref{PALM_problem} has the KL property.
	Further, let all functions have finite infima.
	Assume that $\nabla H$ is locally Lipschitz continuous
	and that both $x_i \mapsto \nabla_{x_i} H(x_1,x_2)$ are globally Lipschitz, 
	where the constants $L_1(x_2), L_2(x_1)$ possibly depend on the fixed variable and are bounded on compact sets.
	Finally, assume 
	\begin{equation}
	    \sigma_k^1 =\frac{1 + 2 \alpha_k}{2(1-\alpha_k)}L_1(y_{k}) \quad \text{and} \quad \sigma_k^2 =\frac{1 + 2 \alpha_k}{2(1-\alpha_k)}L_2(x_{k+1})
	\end{equation}
	and $\alpha_k<0.5$ for every $k \in \N$.
	If the sequence generated by \eqref{PALM_x} is bounded, then it converges to a critical point.
\end{theorem}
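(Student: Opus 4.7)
The plan is to follow the standard three-ingredient framework for convergence analysis of proximal algorithms under the KL property, as developed by Attouch--Bolte--Svaiter and adapted to inertial schemes. The three ingredients I aim to establish are: (i) a sufficient decrease of a suitable Lyapunov function $\Phi_k$ built from $\Psi$ plus quadratic ``memory'' terms, (ii) a relative error bound between consecutive iterates and a subgradient of $\Psi$, and (iii) a continuity property linking $\Phi_k$ to $\Psi$ at limit points. Once all three are in place, the KL inequality yields summability of $\sum_k \|(x_{k+1},y_{k+1}) - (x_k,y_k)\|$, which together with the assumed boundedness of the iterates gives convergence to a critical point of $\Psi$.

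For the sufficient decrease, I would first apply the descent lemma to $H(\cdot, y_k)$ at $\bar x_k$ using the partial Lipschitz constant $L_1(y_k)$ and then combine it with the optimality of the proximal step $x_{k+1}$ (obtained by inserting $x_k$ as a competitor into the proximal subproblem). This produces an inequality that, after handling the extrapolation term $\alpha_k(x_k - x_{k-1})$ via Young's inequality, takes the form $\Psi(x_{k+1}, y_k) + a_k \|x_{k+1}-x_k\|^2 \le \Psi(x_k, y_k) + b_k \|x_k - x_{k-1}\|^2$. Performing the analogous argument for the $y$-block and summing, the specific stepsize choice $\sigma_k^i = \tfrac{1+2\alpha_k}{2(1-\alpha_k)} L_i$ together with the threshold $\alpha_k < 1/2$ makes the residual quadratic form positive, so the Lyapunov function
\begin{equation*}
    \Phi_k \coloneqq \Psi(x_k, y_k) + \gamma_1 \|x_k - x_{k-1}\|^2 + \gamma_2 \|y_k - y_{k-1}\|^2
\end{equation*}
decreases monotonically with a gap of order $\|x_{k+1}-x_k\|^2 + \|y_{k+1}-y_k\|^2$. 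Telescoping yields square-summability of the iterate increments and boundedness of $\{\Phi_k\}$ from below (thanks to the finite-infima assumption on $\Psi$).

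For the subgradient bound I would exploit the first-order optimality of the prox step, which gives $-\sigma_k^1(x_{k+1}-\bar x_k) - \nabla_x H(\bar x_k, y_k) \in \partial G_1(x_{k+1})$. Adding and subtracting $\nabla_x H(x_{k+1}, y_{k+1})$ yields an element of $\partial G_1(x_{k+1}) + \nabla_x H(x_{k+1}, y_{k+1})$, i.e., the $x$-component of $\partial \Psi(x_{k+1}, y_{k+1})$, whose norm is controlled by $\|x_{k+1}-x_k\| + \|x_k - x_{k-1}\| + \|y_{k+1}-y_k\|$ using the local Lipschitz constant of $\nabla H$ on the (bounded) orbit. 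The same treatment of the $y$-step provides a full subgradient $\xi_{k+1} \in \partial\Psi(x_{k+1},y_{k+1})$ with $\|\xi_{k+1}\|$ bounded by a constant multiple of the consecutive iterate differences.

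The final step is the KL closure. Since the iterates are bounded, their limit-point set $\omega$ is non-empty and compact; using continuity of $\Psi$ one checks that $\omega$ consists of critical points and that $\Phi_k$ converges to the common value of $\Psi$ on $\omega$. Applying the uniformized KL inequality to $\Phi$ on a neighborhood of $\omega$ and combining the desingularizing function with the decrease and subgradient bounds gives a telescoping estimate of the form $\sum_k \|(x_{k+1},y_{k+1}) - (x_k,y_k)\| < \infty$, so the sequence is Cauchy and converges to a point in $\omega$. I expect the main obstacle to be the construction of the Lyapunov function: absorbing the inertial contribution $\alpha_k(x_k - x_{k-1})$ into the decrease inequality and verifying that the threshold $\alpha_k < 1/2$ together with the prescribed stepsizes $\sigma_k^i$ make the residual quadratic form strictly positive is the step where every constant has to be balanced carefully. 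Everything else is a routine adaptation of the non-inertial PALM analysis.
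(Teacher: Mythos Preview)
The paper does not prove this theorem at all: it is quoted verbatim from \cite[Thm.~4.1]{PS17} and immediately followed by a remark, with no proof environment in between. Your outline is a faithful sketch of the argument actually given in \cite{PS17} (Lyapunov function with inertial memory terms, sufficient decrease via the descent lemma and Young's inequality, relative-error bound from prox optimality, KL closure on the bounded orbit), so there is nothing in the present paper to compare it against.
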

\begin{remark}
    Although this is not supported by Theorem \ref{thm:palm}, it turned out that choosing $\alpha_k=(k-1)/(k-2)$, $\sigma_k^1 =L_1(y_{k})$ and $\sigma_k^2 =L_2(x_{k+1})$ works very well in practice \cite{PS17}.
    Unfortunately, the Lipschitz-moduli $L(x_k), L(y_k)$ are often unknown or difficult to compute.
    Instead, a backtracking scheme can be used to ensure this condition, see \cite{PS17} for details.
\end{remark}

Next, we want to apply iPALM to problem \eqref{eq:Disc_Prob}.
Both regularizers and the data fidelity term have the KL property since they are computed via composition of an affine function and a permissible norm function.
Further, the operation $(\mathbf{v}, \mathbf z) \mapsto \mathbf R_{\mathbf{v}, \mathbf z}$ is polynomial in $(\mathbf{v}, \mathbf z)$ for every component as the RK4 scheme and linear interpolators provide a composition of polynomial functions.
Hence, the objective in \eqref{eq:Disc_Prob} has the KL property.
We propose to use the splitting
\begin{align}
&H\colon\mathbb{R}^N \times \mathbb{R}^{ m^d}\to \mathbb{R},\quad (\mathbf{v}, \mathbf{z}) \mapsto \mathcal D_{SSD}\bigl(\mathbf{K}(\mathbf R_{\mathbf{v}, \mathbf z}),\mathbf{g}\bigr),\\
&G_1\colon \mathbb{R}^N \to \mathbb{R},\hspace{1.4cm} \mathbf{v} \mapsto  \frac{\lambda_1}{2} h_t h_X^d \mathbf{v}^T \mathbf{B}^T\mathbf{B}\mathbf{v},\\
&G_2\colon \mathbb{R}^{m^d} \to \mathbb{R},\hspace{1.3cm} \mathbf{z} \mapsto   \lambda_2 h_X^d \sum_{i}\Vert (\nabla_{h_X}(\mathbf{z}))_i\Vert.
\end{align}

The partial gradients of $H$ are given by
\begin{align}
\nabla_{\mathbf{v}} H(\mathbf{v},\mathbf{z}) &= h_Y \bigl( \mathbf{K}(\mathbf R_{\mathbf{v}, \mathbf z})-\mathbf{g}\bigr)^\tT {\partial } \mathbf{K}(\mathbf R_{\mathbf{v}, \mathbf z}) \frac{\partial}{\partial \mathbf{v}} \mathbf R_{\mathbf{v}, \mathbf z},\\
\nabla_{\mathbf{z}} H(\mathbf{v},\mathbf{z}) &= h_Y \bigl( \mathbf{K}(\mathbf R_{\mathbf{v}, \mathbf z})-\mathbf{g}\bigr)^\tT {\partial } \mathbf{K}(\mathbf R_{\mathbf{v}, \mathbf z}),
\end{align}
where $\partial \mathbf K$ refers to the Fréchet-derivative of the operator $\mathbf{K}$.
If $\mathbf{K}$ is linear, its derivative coincides with the operator itself.
The derivative $\frac{\partial}{\partial \mathbf{v}} \mathbf R_{\mathbf{v}, \mathbf z}$ of the solution map is given by
\begin{equation}
    \frac{\partial}{\partial \mathbf{v}} \mathbf R_{\mathbf{v}, \mathbf z} = \nabla_x \mathbf T\bigl(\varphi_{1,\mathbf v}(0, \mathbf{x_c})\bigr) \frac{\partial}{\partial \mathbf{v}} \varphi_{1,\mathbf v}(0, \mathbf{x_c}).
\end{equation}
Here, $\nabla_x \mathbf T$ is the gradient of the interpolated template $T$.
The derivative $\frac{\partial}{\partial \mathbf{v}} \varphi_{1,\mathbf v}(0, \mathbf{x_c})$ can be computed recursively within the ODE solver for \eqref{eq:Lag_mod}.
Exemplary, we obtain for the explicit Euler scheme \eqref{eq:euler_step} that 
\begin{align}
\frac{\partial}{\partial \mathbf{v}} \varphi_{1,\mathbf v}(t_{k+1}, \mathbf{x_c}) =\frac{\partial}{\partial \mathbf{v}} \varphi_{1,\mathbf v}(t_{k}, \mathbf{x_c})
&+\Delta t \frac{\partial}{\partial \mathbf{v}} I \bigl( \mathbf{v} , t_k, \varphi_{1,\mathbf v}(t_{k}, \mathbf{x_c})\bigr)\notag\\
&+\Delta t \frac{\partial}{\partial \varphi} I \bigl( \mathbf{v} , t_k, \varphi_{1,\mathbf v}(t_{k}, \mathbf{x_c})\bigr) \frac{\partial}{\partial \mathbf{v}} \varphi_{1,\mathbf v}(t_{k}, \mathbf{x_c})
\end{align}
for all $k=0,\ldots,N_t-1$, see also \cite{Mod2009}.

Further, we require the proximal mappings of $G_1$ and $G_2$.
For $G_1$, it holds that
\begin{equation}
    \prox_{\sigma_1 G_1}(\mathbf v) = \argmin_{\mathbf{x} \in \mathbb{R}^N} \frac{1}{2\sigma_1}\|\mathbf{x}-\mathbf v\|^2 + \frac{1}{2} \gamma h_t h_X^d \mathbf{x}^T \mathbf{B}^T\mathbf{B}\mathbf{x},
\end{equation}
where the minimum is determined by the linear system of equations
$(\text{Id}+\sigma_1 \gamma h_t h_X^d \mathbf{B}^T\mathbf{B})\mathbf{x}= \mathbf{v}$.
This system is sparse and efficiently solvable with a preconditioned conjugate gradient method.
Regarding $G_2$, we need to compute $\prox_{\sigma_2 \TV}$ with the discrete TV \eqref{eq:disc_TV}.
As outlined in \cite{BL18}, this can be done using the primal dual hybrid gradient method \cite{CP11}.

The step-sizes in the iPALM scheme \eqref{PALM_x} are chosen as the partial Lipschitz constants of $\nabla_\mathbf{v} H(\mathbf{v}, \mathbf{z})$ and $\nabla_\mathbf{z} H(\mathbf{v}, \mathbf{z})$, respectively.
For a linear $\mathbf{K}$, it holds $\nabla_\mathbf{z} H(\mathbf{v}, \mathbf{z})=  h_Y \mathbf{K}^T \mathbf{K}(\mathbf{f}(\mathbf R_{\mathbf{v}, \mathbf z})-\mathbf{g})$. 
Hence, $\mathbf{z} \to \nabla_\mathbf{z} H(\mathbf{v}, \mathbf{z})$ is Lipschitz with $ L_2(\mathbf{v})=h_Y \|\mathbf{K}^T \mathbf{K}\|$ for every $\mathbf{v}$.
Unfortunately, an upper bound for $L_1$ cannot be derived explicitly.
Hence, we have to rely on backtracking instead.
\begin{remark}[Normalized-cross-correlation-based distance]
\label{rem:NCCnum}
For $\mathcal D_{\mathrm{NCC}}$, we use the discretization
\begin{equation}
    \mathcal D_{\mathrm{NCC}}(\mathbf{x}, \mathbf{y}) = 1 - \frac{(\mathbf{x}^{\top}\mathbf{y})^{2}}{\Vert \mathbf{x} \Vert^{2} \Vert \mathbf{y} \Vert^{2}},
\end{equation}
for which the derivative is given by
\begin{equation}\label{eq:deriv_NCC}
	\frac{\partial}{\partial \mathbf{x}} \mathcal D_{\mathrm{NCC}}(\mathbf{x}, \mathbf{y}) = - 2\frac{(\mathbf{x}^{\top} \mathbf{y}) \mathbf{y}}{\lVert \mathbf{x} \rVert^{2} \lVert \mathbf{y} \rVert^{2}} + 2\frac{(\mathbf{x}^{\top} \mathbf{y})^{2} \mathbf{x}}{\lVert \mathbf{x} \rVert^{4} \lVert \mathbf{y} \rVert^{2}}.
\end{equation}
Incorporating \eqref{eq:deriv_NCC}, we can perform the according gradient steps in the iPALM scheme \eqref{PALM_x}.
Then, also a line search for estimating the Lipschitz constant with respect to $z$ is needed.
As discussed in Remark \ref{rem:NCC1}, $\mathcal D_{\mathrm{NCC}}$ should be paired with an $L^2$-regularizer for the source variable to get theoretical guarantees.
Then, we obtain $\mathcal{E}_2(z)=\|z\|_{L^2(\Omega)}^2$, which we discretize similarly as $\mathcal D_{\mathrm{SSD}}$, namely as
\begin{equation}
    G_2\colon \mathbb{R}^{m^d}\to \mathbb{R}, \quad \mathbf{z}\to \frac{1}{2} h_X\mathbf{z}^T\mathbf{z}.
\end{equation}
This results in the proximal operator
\begin{equation}
    \prox_{\sigma G_2}(\mathbf{z})= \argmin_{\mathbf{x} \in \mathbb{R}^{m^d}} \frac{1}{2\sigma} \|\mathbf{x}-\mathbf{z}\|^2 +\frac{1}{2} h_X \mathbf{x}^T \mathbf{x} =\frac{1}{1+\sigma/h_X}\mathbf{z}.
\end{equation}
\end{remark}

\paragraph{Multi-level approach and post-processing}
Due to the non-convexity of \eqref{eq:Disc_Prob}, we have to cope with local minima.
To avoid being trapped in one, we follow a multi-level strategy \cite{Mod2009}, which also helps to reduce the computational cost.
Its different levels refer to different resolutions of the template and the target.
We apply iPALM at each level, starting with the coarsest resolution.
Each computed minimizer is bilinearly interpolated to the next finer scale to serve as initialization.

This approach requires multi-level versions of the operator $\mathbf K$ and a method for downsampling the measurements $g$.
If these are not available, iPALM can still be performed with only one scale.
For the 2D Radon transform, multi-level versions of the operator $\mathbf K$ can be obtained with any backend that takes the discretization of the measurement geometry as an input.
More precisely, assume that the number of grid cells used to discretize $\Omega \subset \R^2$ at the finest level is $m = 2^l$, $l \in \N$.
In our experiments, we set the number of cells for discretizing the measurement domain $(0, 1)$ at level $k \leq l$ to $q^{(k)} = 1.5 \cdot 2^k$ and the length of each cell to \smash{$h^{(k)}_Y = 1/{q^{(k)}}$}.
Then, a multilevel representation of each measurement $\mathbf{g}_{i}$, $i \le p$, at cell-centered grid points \smash{$\mathbf{y}_{j} = (j - 1/2) h_{Y}^{(k - 1)}$} is given by
\begin{equation}
	\mathbf{g}_{i}^{(k - 1)}(\mathbf{y}_{j}) = \left( \mathbf{g}_{i}^{(k)}(\mathbf{y}_{j}) + \mathbf{g}_{i}^{(k)}\bigl(\mathbf{y}_{j} + h_{Y}^{(k)}\bigr) \right) / 4,
\end{equation}
where the denominator arises from averaging over two neighboring grid points and dividing the edge length of the image domain $\Omega$ in each coordinate direction in half.

Additionally, after applying iPALM, we deploy the second-order inexact Gauss--Newton method from \cite{LNOS19,ManRut17} for refining the velocity $\mathbf v$.
This method utilizes the same discretization and Lagrangian solver that is used for iPALM.
Due to the linearity of $\mathbf{K}$ and the structure of $\mathcal D_{\mathrm{SSD}}$, we can indeed fix $\mathbf z$ and consider the data $\tilde{\mathbf{g}}= \mathbf{g}-\mathbf{K}(\mathbf{z})$ within the corresponding LDDMM Gauss--Newton scheme for $\mathbf v$.
We observed that this can compensate the slower convergence rate of iPALM.
Finally, note the described modifications can also be applied within the setting of Remark \ref{rem:NCCnum}.
\begin{remark}
    There is a vast literature on alternating minimization and forward-backward schemes with variable metrics \cite{BFO19,BLPP17,CPR16,FGP15,STP17}, which attempt to improve the convergence speed.
    For the iPALM scheme \eqref{PALM_x}, our simulations indicate that an adaption of the metric is most promising for $\mathbf v$.
    To pursue this idea further, we choose a different splitting of \eqref{eq:Disc_Prob} and add $G_1$ to $H$ instead.
    Hence, $G_1=0$ and $\prox_{G_1} = \Id$ is independent of the chosen metric for the coordinate $\mathbf v$.
    There are different strategies for constructing metrics such that convergence guarantees can be obtained, e.g., minimize-maximize strategies \cite{CPR16} or sparse approximations of the Hessian \cite{BFO19}.
    For a small benchmark, we deployed the same metric for $\mathbf v$ as proposed in \cite{LNOS19}, and solved \eqref{eq:Disc_Prob} with a variable metric version of PALM, i.e., without the inertia steps.
    The sufficient decrease of the objective with respect to $\mathbf v$ is ensured with the same Armijo line search as in \cite{LNOS19}.
    Experimentally, this has led to similar results as our post-processing scheme.
    A more thorough comparison of the approaches could be an interesting direction of future research.
\end{remark}

\section{Numerical Examples} \label{sec:experiments}
\begin{figure}[t]
\centering
\begin{subfigure}{.32\textwidth}
  \centering
  \includegraphics[width=\linewidth]{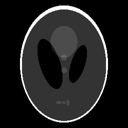}
  \caption{Template image $\mathbf T$.}  
\end{subfigure}
\begin{subfigure}{.32\textwidth}
  \centering
  \includegraphics[width=\linewidth]{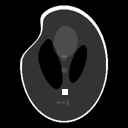}
  \caption{Target image $\mathbf U$.}  
\end{subfigure}
\begin{subfigure}{.32\textwidth}
  \centering
  \includegraphics[width=\linewidth]{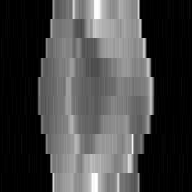}
  \caption{Noisy data $\mathbf g$.}  
\end{subfigure}
\vspace{.1cm}

\begin{subfigure}{.32\textwidth}
  \centering
  \includegraphics[width=\linewidth]{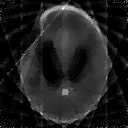}
  \caption{$L^2$-TV model \eqref{eq:L2_TV}.
  } 
  \label{phantom d} 
\end{subfigure}
\begin{subfigure}{.32\textwidth}
  \centering
  \includegraphics[width=\linewidth]{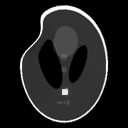}
  \caption{Our method with $E_2 = \TV$.}
  \label{phantom a}  
\end{subfigure}
\begin{subfigure}{.32\textwidth}
  \centering
  \includegraphics[width=\linewidth]{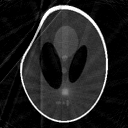}
  \caption{Our method with $E_2 = \Vert \cdot \Vert^2$.}  
  \label{phantom b}
\end{subfigure}
\caption{Reconstruction for a cartoon like image with 10 measurement angles using our model \eqref{eq:Indirect_Meta_speci} and the $L^2$-TV model \eqref{eq:L2_TV} as a baseline comparison.
The reconstruction $\mathbf R$ obtained with \eqref{eq:Indirect_Meta_speci} can be decomposed into a deformation and a source part.}
\label{cartoon}
\end{figure}
Here, we present numerical results for our model \eqref{eq:Indirect_Meta_speci}, discretized and solved numerically as described in Section~\ref{sec:discrete}.
The code for all experiments is available on Github\footnote{https://github.com/anttop/FAIR.m}.
Since we are mainly interested in a proof-of-concept, we only investigate CT and leave other inverse problems for future work.
Our examples mostly rely on synthetic data generated from target images $\mathbf U$ of size $128 \times 128$ with range $[0,1]$.
A corresponding sinogram $\mathbf g$ is obtained by applying the Radon transform with ten equally distributed angles in $[0,180]$ to $\mathbf U$.
Then, we add $5 \%$ Gaussian noise to get $\mathbf{g}$.
Further, we also include one example with real data from a CT scanner.
For all examples, we use a time-dependent velocity field $\mathbf v$ with a single time point, and 5 steps in the Runge--Kutta method that solves the associated equation \eqref{eq:Lag_mod}.
The multi-level procedure starts with resolution $32 \times 32$  at the coarsest scale.
Throughout this section, all parameters are optimized via grid search.

For the first experiment, the template $\mathbf T$ is chosen as the Shepp-Logan phantom and the target $\mathbf U$ is a diffeomorphically deformed version of $\mathbf T$ with an additional small white square, see Figure~\ref{cartoon}.
As TV regularization favors constant areas, this image pair is well-suited for our model \eqref{eq:Indirect_Meta_speci}, and an almost perfect reconstruction is expected.
Due to the additional structure, a good reconstruction with purely diffeomorphic approaches such as \cite{CO18,LNOS19} is impossible.
The regularization parameter $\lambda_1$ splits into $\lambda_1=[\lambda_1^1,\lambda_1^2, \lambda_1^3]$ for the spatial, temporal and $L^2(\Omega, \mathbb{R})$ regularization of $\mathbf v$, respectively.
We have chosen $\lambda_1=[0.001,0.001,10^{-6}]$ and $\lambda_2= 0.1$.
Recall that the reconstruction $\mathbf R$ can be decomposed into a deformation part $\mathbf T(\varphi^{-1}(1,\mathbf{x_c}))$ and a source part $\mathbf z$.
We observe that the main structure is reconstructed as deformation of the template $\mathbf T$, whereas $\mathbf z$ reconstructs the additional square, see Figure~\ref{cartoon}.
This is indeed the expected behavior for properly chosen parameters.
As comparison, we included a reconstruction with the standard $L^2$-TV model \cite{ROF92}, i.e., the solution of
\begin{equation}\label{eq:L2_TV}
    \min_{\mathbf{R}}\|\mathbf{K}(\mathbf{R})-\mathbf{g}\|_2^2 + \lambda \TV(\mathbf{R}),
\end{equation}
where we have chosen $\lambda= 0.1$ based on a grid search.
For \eqref{eq:L2_TV}, we observe the typical reconstruction artifacts related to the Radon transform, i.e., rays crossing the reconstruction.
If we increase $\lambda$ to avoid these artifacts, the reconstruction looses details.
Last, we also included a reconstruction with $E_2 = \Vert \cdot \Vert^2$ and $\lambda_2= 0.0001$ instead of $E_2 = \TV$.
This regularization has been investigated for the metamorphosis model \eqref{eq:Indirect_Meta} in \cite{GO18}.
As expected, we get similar artifacts as in \eqref{eq:L2_TV} with small $\lambda$.

In our second experiment, we deal with a pair of images that contain finer details.
More precisely, we have chosen an artificial brain image \cite{GLPU12} as template $\mathbf T$, which is diffeomorphically deformed into the target $\mathbf U$ \cite{LNOS19}.
Further, we added a structure in $\mathbf U$ to get a non-diffeomorphic setting.
We also varied the smoothness of this structure, which leads to two sub-experiments.
In the first one, we added a circle with constant intensity and in the second one a 2D Gaussian.
The obtained results for \eqref{eq:Indirect_Meta_speci} with $\lambda_1=[0.001,0.1,10^{-6}]$ and $\lambda_2=0.2$ are depicted in Figure~\ref{brain_circle}.
Note that we increased the contrast for the error maps in order to improve the visibility.
Our method is able to reconstruct all the major structures in $\mathbf U$.
Most of the errors occur at the boundaries of the structures, which is partially due to the employed interpolation.
Similarly as reported in \cite{LNOS19}, our approach struggles with the swirl in the middle of $\mathbf U$.
This is most likely due to the large, almost non-diffeomorphic deformation and the limited amount of data.
If we compare the $\mathbf z$ for the two sub-experiments, we observe that the reconstruction of the Gaussian is worse, which is not surprising as $\TV$-regularization favors piece-wise constant images.
In absence of topological changes, \cite{LNOS19} demonstrated that a LDDMM-based approach  without the source term can yield satisfactory results.
A reconstruction with our method for $\mathbf{U}$ without the additional structure is provided in  Figure~\ref{fig: no_top}. 
For $\lambda_1=[0.001,0.001,10^{-5}]$  and $\lambda_2=1$, the reconstruction consists only of a  deformation part. 
However, if $\lambda_2$  is chosen too small, artifacts related to the source $z$ appear.
\begin{figure}
\centering
\begin{subfigure}{.32\textwidth}
  \centering
  \includegraphics[width=\linewidth]{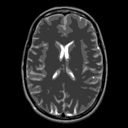}
  \caption{Template image $\mathbf T$.} 
  \label{template} 
\end{subfigure}
\begin{subfigure}{.32\textwidth}
  \centering
  \includegraphics[width=\linewidth]{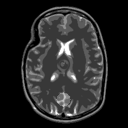}
  \caption{Target image $\mathbf U_1$.} 
  \label{target} 
\end{subfigure}
\begin{subfigure}{.32\textwidth}
  \centering
  \includegraphics[width=\linewidth]{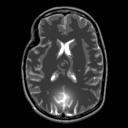}
  \caption{Target image $\mathbf U_2$.} 
  \label{target_g} 
\end{subfigure}
\vspace{.1cm}

\begin{subfigure}{.32\textwidth}
  \centering
  \includegraphics[width=\linewidth]{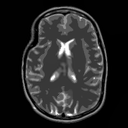}
  \caption{Reconstruction of $\mathbf{U}_1$.} 
  \label{result} 
\end{subfigure}
\begin{subfigure}{.32\textwidth}
  \centering
  \includegraphics[width=\linewidth]{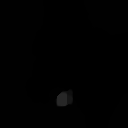}
  \caption{Source $\mathbf z_1$.}  
  \label{deformation}
\end{subfigure}
\begin{subfigure}{.32\textwidth}
  \centering
  \includegraphics[decodearray={0 2.5},width=\linewidth]{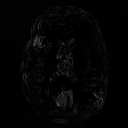}
  \caption{Error: max $0.3$, $\text{SSD} =0.02$.} 
  \label{error}
\end{subfigure}
\vspace{.1cm}

\begin{subfigure}{.32\textwidth}
  \centering
  \includegraphics[width=\linewidth]{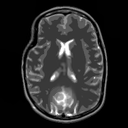}
  \caption{Reconstruction of $\mathbf{U}_2$.} 
  \label{result_g} 
\end{subfigure}
\begin{subfigure}{.32\textwidth}
  \centering
  \includegraphics[width=\linewidth]{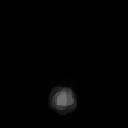}
  \caption{Source $\mathbf z_2$.}  
  \label{deformation_g}
\end{subfigure}
\begin{subfigure}{.32\textwidth}
  \centering
  \includegraphics[decodearray={0 2.5},width=\linewidth]{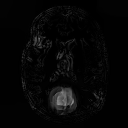}
  \caption{Error: max $0.52$, $\text{SSD} =0.03$.} 
  \label{error_g} 
\end{subfigure}
\caption{Artifical brain image with data from $10$ equally distributed angles in $[0,180]$.
The first target contains an additional circle, which is well-suited for $\TV$ regularization of $z$.
The second one contains an additional 2D Gaussian, which is more challenging for this setting.}
\label{brain_circle}
\end{figure}
\begin{figure}
\begin{subfigure}{.32\textwidth}
  \centering
  \includegraphics[width=\linewidth]{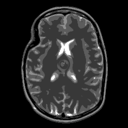}
  \caption{Target image $\mathbf U$.} 
  \label{target nt} 
\end{subfigure}
\begin{subfigure}{.32\textwidth}
  \centering
  \includegraphics[width=\linewidth]{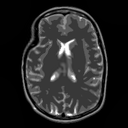}
  \caption{Result for $\lambda_2=1$.} 
  \label{result nt}
\end{subfigure}
\begin{subfigure}{.32\textwidth}
  \centering
  \includegraphics[width=\linewidth]{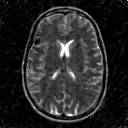}
  \caption{Result for $\lambda_2=0.0001$.}  
  \label{res small nt}
\end{subfigure}
\caption{Diffeomorphic counterpart to Figure \ref{brain_circle}, where $\mathbf{U}$ does not contain an additional structure.}
\label{fig: no_top}
\end{figure}

Next, we comment
on the robustness with respect to parameter changes.
As the comparative examples in Figure~\ref{fig: no_top} and~\ref{fig:lotus} show, the 
choice of parameters significantly impacts the quality of the reconstruction.
For the images from Figure~\ref{brain_circle}, the influence of the parameter choice in terms of SSD error and SSIM value is provided in Tabular~\ref{tab: SSIM}.
Some corresponding reconstructions are given in Figure~\ref{fig:ExamplesPara}.
Changing each parameter by an order of magnitude leads to clearly visible changes in the reconstruction, see Figure~\ref{fig:ExamplesPara}.
In particular, too large regularization parameters can repress the effect of their respective component.
In contrast, changes on a smaller scale lead to robust reconstruction results.
For the other examples from this section, we observed a similar behavior, although the precise scale depends on the underlying set of images.
\begin{figure}
    \centering
    \begin{subfigure}{0.24\textwidth}
       \includegraphics[width=\textwidth]{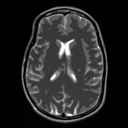} 
        \caption{$\lambda_1=10$, $\lambda_2=2$}
        \label{fig: wrong a}
    \end{subfigure}
    \begin{subfigure}{0.24\textwidth}
       \includegraphics[width=\textwidth]{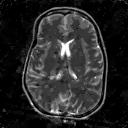} 
         \caption{$\lambda_1=0.01$, $\lambda_2=0.001$}
        \label{fig: wrong b} 
    \end{subfigure} 
    \begin{subfigure}{0.24\textwidth}
       \includegraphics[width=\textwidth]{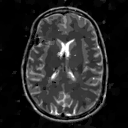} 
        \caption{$\lambda_1=0.1$, $\lambda_2=0.05$}
        \label{fig: right a}
    \end{subfigure}
    \begin{subfigure}{0.24\textwidth}
       \includegraphics[width=\textwidth]{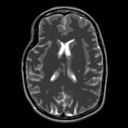} 
        \caption{$\lambda_1=0.1$, $\lambda_2=0.5$}
        \label{fig: right b}
    \end{subfigure}
    \caption{Reconstructions of an artificial brain image with various parameters.
    Drastic changes lead to artifacts (\ref{fig: wrong a}-\ref{fig: wrong b}), whereas
    smaller ones lead to similar reconstructions (\ref{fig: right a}-\ref{fig: right b}) as in Figure~\ref{result}.}\label{fig:ExamplesPara}
\end{figure}
\begin{table}
    \centering
    \begin{tabular}{r|p{2cm}| p{2cm}|p{2cm}|p{2cm}}
    \diagbox{$\lambda_1$}{$\lambda_2$} 
                  &$0.02$    & $0.2$    &$2$    & $20$  \\
       \hline
       $0.01$     &SSD: $0.0073$ \newline SSIM: $0.28$   
                  &SSD:  $0.0066$ \newline SSIM: $0.30$
                  &SSD: $0.0061$  \newline SSIM: $0.31$ 
                  &SSD:$ 0.0054$  \newline SSIM: $0.34$ 
                  \\
                  \hline
        $0.1$     &SSD: $0.0019$ \newline SSIM: $0.44$ 
                  &SSD:  $0.0007$\newline SSIM: $0.54$ 
                  &SSD: $0.0011$ \newline SSIM: $0.51$ 
                  &SSD: $ 0.0013$\newline SSIM: $0.50$ 
                  \\
                   \hline
        $1$      &SSD: $0.0027$\newline SSIM: $0.42$ 
                 &SSD:  $0.0008$ \newline SSIM: $0.54$ 
                 &SSD: $0.0012$ \newline SSIM: $0.51$ 
                 &SSD: $ 0.0012$\newline SSIM: $0.51$ 
                 \\
                  \hline
        $10$     &SSD: $0.0042$ \newline SSIM: $0.37$ 
                &SSD:  $0.0017$ \newline SSIM: $0.49$ 
                &SSD: $0.0016$ \newline SSIM: $0.51$ 
                &SSD: $ 0.0020$\newline SSIM: $0.48$ 
                \\
                 \hline
        $100$    &SSD: $0.0057$\newline SSIM: $0.32$ 
                &SSD:  $0.0028$\newline SSIM: $0.44$ 
                &SSD: $0.0033$ \newline SSIM: $0.45$ 
                &SSD:  $ 0.0033$\newline SSIM: $0.45$ 
    \end{tabular}
    \caption{SSD-error and SSIM for various parameters with the images from Figure~\ref{brain_circle}.
    For greater clarity, the deformation regularization parameter is chosen as $\lambda=\lambda_1 [0.001,0.1, 10^{-6}]$.}
    \label{tab: SSIM}
\end{table}

For our third experiment, see Figure~\ref{fig:hands}, we have chosen two X-ray images that are not diffeomorphic to each other and contain some noise structures.
Finding the correct deformation for this pair is challenging as the deformation is relatively large and irregular.
In this experiment, we compare the third-order regularizer ($\lambda_1=[0.5,0.1,10^{-6}]$, $\lambda_2=1$) with the curvature one from \cite{ManRut17} ($\lambda_1=[6,6]$, $\lambda_2=1$), which imposes less regularity on $\mathbf v$.
For both choices, we observe that the model struggles to bend the hand to correctly match the target.
This is most noticeable at the right corner of the palm and at some of the fingers.
For the third-order regularizer, we also notice that the fingers are not spread sufficiently from each other.
The additional or disappearing structures are very fine, such as the noise beside the hands or the slight change in intensity on the edges of the bones.
Hence, these are not well-suited for TV regularization and not reconstructed by our method.
However, even if we would use a different $E_2$, it is questionable if the amount of data suffices to reconstruct these structures.
More precisely, reconstructing fine details without sufficient data or an appropriate prior is in general impossible.
Nevertheless, this experiment shows that our model \eqref{eq:Indirect_Meta_speci} allows to align image pairs with large deformations between them, even under the presence of noise.

In our last synthetic experiment, we demonstrate that our method is in principal also applicable to 3D CT reconstruction problems, see Figure~\ref{fig:3d}.
To this end, we use an image pair from~\cite{Mod2009} and the same 10 simulated 2D measurements as in \cite{LNOS19}, which correspond to an rotation around the third coordinate axis with angles equally distributed in $[0,180]$.
Due to the problem size, we use curvature regularization on $\mathbf v$ with $\lambda_1=[0.07,0.07]$ for the spatial and temporal components, respectively, and for the source $\mathbf z$ we use the regularization parameter $\lambda_2=0.01$.
Overall, we obtain a satisfying reconstruction with a SSIM of $0.9060$, which is significantly better than the SSIM  of $0.8807$ obtained by \cite{LNOS19}.

In our final experiment, we tackle real CT data from a lotus root cross-section \cite{BubHauHuoRimSil16}.
Since the recorded data is dense, the underlying target $\mathbf U$ can be reconstructed via filtered back-projection.
Retroactively, we deformed the computed target $\mathbf U$ and removed a hole in the lotus root to obtain a template $\mathbf T$ with a different topology, see Figure~\ref{fig:lotus}.
To get a sparse setting, we subsampled $\mathbf g$ with 12 uniformly distributed angles in $[0,180]$.
As we are already dealing with real data, no additional noise is added. 
Unfortunately, the intensity range of the given sinogram does not match the range of our Radon transform operator $\mathbf{K}$.
Therefore, $\mathcal D_{\mathrm{SSD}}$ is no sensible choice for comparing the real data with the simulated data $\mathbf{K}(\mathbf R)$.
Instead, we use $\mathcal D_{\mathrm{NCC}}$, which  is invariant to the scaling of the images' intensity, see Remark~\ref{rem:NCC1}.
Although our theoretical results in Section~\ref{sec:proper} do not apply in this setting, we obtain satisfying numerical results using $\lambda_1=[0.01,0.01,10^{-6}]$ and $\lambda_2=0.001$, see Figure~\ref{fig:lotus}.
More precisely, our method manages to find the main deformation and the additional hole.
In this real data setting, we also investigate how a reconstruction with $E_2 = \Vert \cdot \Vert^2$ instead of $E_2 = \TV$ performs.
Here, our theoretical results from Section~\ref{sec:proper} hold again.
To this end, we included two reconstructions corresponding to $\lambda_2=0.1$ and $\lambda_2=0.01$, respectively.
In the first reconstruction, we have almost no artifcats, but can also only guess the new hole.
For the second one, the hole can be clearly seen, but the artifcats are much stronger.
Either way, the results are inferior to those generated with $E_2 = \TV$.
\begin{figure}
\centering
\begin{subfigure}{.31\textwidth}
  \centering
  \includegraphics[width=\linewidth]{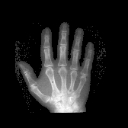}
  \caption{Template image $\mathbf T$.} 
  \label{template Hands} 
\end{subfigure}
\begin{subfigure}{.31\textwidth}
  \centering
  \includegraphics[width=\linewidth]{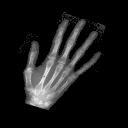}
  \caption{Target image $\mathbf U$.} 
  \label{target Hands} 
\end{subfigure}
\begin{subfigure}{.31\textwidth}
  \centering
  \includegraphics[width=\linewidth]{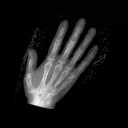}
  \caption{Reconstruction (third order).} 
  \label{result Hands} 
\end{subfigure}
\vspace{.1cm}

\begin{subfigure}{.31\textwidth}
  \centering
  \includegraphics[width=\linewidth]{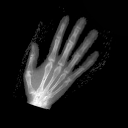}
  \caption{Reconstruction (curvature).\\ \quad}
  \label{deformation Hands}
\end{subfigure}
  \begin{subfigure}{.31\textwidth}
  \centering
  \includegraphics[width=\linewidth]{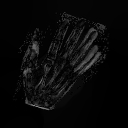}
  \caption{Error (third order):\\\strut{}
  \hspace{.3cm} max $0.54$, $\text{SSD} =0.05$.} 
  \label{fig:error_hands} 
\end{subfigure}
\begin{subfigure}{.31\textwidth}
  \centering
  \includegraphics[width=\linewidth]{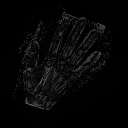}
  \caption{Error (curvature):\\\strut{} \hspace{.3cm} max $0.44$, $\text{SSD} =0.05$.}
  \label{source Hands} 
\end{subfigure} 
\caption{Reconstruction of a human hand \cite{Mod2009} with measurements for 10 angles in $[0,180]$.
The underlying deformation is relatively large and the images are non-diffeomorphic.
Especially the small noise structures outside of the bone areas are hard to reconstruct.}
\label{fig:hands}
\end{figure}
\begin{figure}
    \centering
    \begin{subfigure}{0.3\textwidth}
        \includegraphics[width=\textwidth]{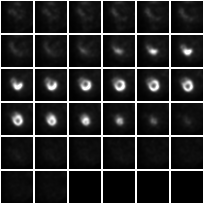}
        \caption{Template image $\mathbf T$.}
        \label{3d template}
    \end{subfigure}
    \begin{subfigure}{0.3\textwidth}
        \includegraphics[width=\textwidth]{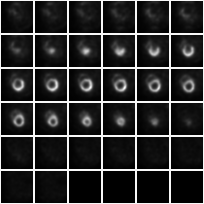}
        \caption{Target $\mathbf U$.}
        \label{3d target}
    \end{subfigure}
     \begin{subfigure}{0.3\textwidth}
        \includegraphics[width=\textwidth]{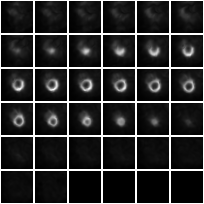}
        \caption{Reconstruction of $\mathbf U$.}
        \label{3d result}
    \end{subfigure}
    \caption{Reconstruction of a 3D volume using only ten measurement directions.
    Each image depicts a slice of the volume along the third coordinate axis.}
    \label{fig:3d}
\end{figure}

\begin{figure}
\centering
\begin{subfigure}{.32\textwidth}
  \centering
  \includegraphics[width=\linewidth]{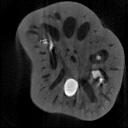}
  \caption{Template image $\mathbf T$.} 
  \label{template lotus} 
\end{subfigure}
\begin{subfigure}{.32\textwidth}
  \centering
  \includegraphics[width=\linewidth]{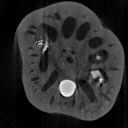}
  \caption{Target image $\mathbf U$.} 
  \label{target lotus} 
\end{subfigure}
\begin{subfigure}{.32\textwidth}
  \centering
  \includegraphics[width=\linewidth]{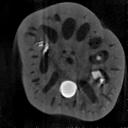}
  \caption{Reconstruction.} 
  \label{result lotus} 
\end{subfigure}
\vspace{.1cm}

\begin{subfigure}{.32\textwidth}
  \centering
  \includegraphics[width=\linewidth]{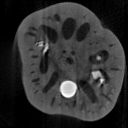}
  \caption{Deformation part only.}  
  \label{deformation lotus}
\end{subfigure}
\begin{subfigure}{.32\textwidth}
  \centering
  \includegraphics[width=\linewidth]{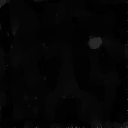}
  \caption{Source $\mathbf z$.}
  \label{source lotus} 
\end{subfigure} 
\begin{subfigure}{.32\textwidth}
  \centering
  \includegraphics[width=\linewidth]{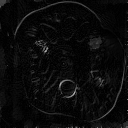}
  \caption{Error: max $2\cdot10^{-4}$ .} 
  \label{error lotus} 
\end{subfigure}
\vspace{.1cm}

\begin{subfigure}{.32\textwidth}
  \centering
  \includegraphics[width=\linewidth]{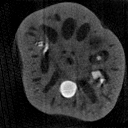}
  \caption{$L^2$-reconstruction $\lambda=0.1$.}
  \label{lotus_L2 1} 
\end{subfigure}
\begin{subfigure}{.32\textwidth}
  \centering
  \includegraphics[width=\linewidth]{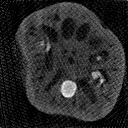}
  \caption{$L^2$-reconstruction $\lambda=0.01$.} 
  \label{lotus_L2 2} 
\end{subfigure}
\caption{Reconstructions for data obtained by a CT scanner, namely $12$ measurements with angles equally distributed in $[0,180]$.}
\label{fig:lotus}
\end{figure}
\section{Conclusions} \label{sec:conclusions}
In this paper, we extended the reconstruction model from \cite{LNOS19} to also cope with topology changes.
On the theoretical side, we were able to carry over all of the previous results.
Compared to \cite{GO18}, we utilize a simplified metamorphosis approach, which allows one to use non-convex regularizers at lower computational cost.
The chosen TV regularization enabled us to obtain satisfying reconstructions even for very limited data without suffering from the typical artifacts.
So far, our experiments are a proof-of-concept.
In the future, we also want to work with larger real data. 
To this end, it could be necessary to use more sophisticated (maybe even problem-tailored) regularization methods for $\mathbf z$ or different data terms $\mathcal D$, which can be incorporated into our model without much effort.
Again, we stress that the method can be easily extended to higher dimensions and other forward operators $\mathbf K$.
Even without the scope of real data, this seems to be a natural direction of future research as our method is designed in a modular way. 

\section*{Acknowledgment}
The authors want to thank Audrey Repetti for fruitful discussions on variable metric optimization methods.

\bibliographystyle{abbrv}
\bibliography{references_clean}
\end{document}